\newcolumntype{P}[1]{>{\centering\arraybackslash}p{#1}}
\newcolumntype{M}[1]{>{\centering\arraybackslash}m{#1}}
\newcommand{\E}{\mathbb{E}}
\theoremstyle{plain}
\newtheorem{thm}{\protect\theoremname}
\theoremstyle{plain}
\newtheorem{lem}[thm]{\protect\lemmaname}
\newtheorem{remark}{Remark}
\providecommand{\lemmaname}{Lemma}
\providecommand{\theoremname}{Theorem}
\newcommand{\hinf}{\mathcal{H}_\infty}
\DeclareMathOperator*{\rank}{rank}
\newcommand{\iid}{\stackrel{\mathclap{\text{\scriptsize{ \tiny i.i.d.}}}}{\sim}}
\newcommand{\R}{\mathbb{R}}
\newcommand{\C}{\mathbb{C}}
\newcommand{\Trace}{\text{Trace}}
\newcommand{\Toep}{\text{Toep}}
\newcommand{\ra}{\text{Range}}
\newcommand{\rsvd}{\mathtt{RSVD}}
\newcommand{\ja}[1]{\textcolor{blue}{#1}}
\begin{document}


\title{Large-Scale System Identification  Using a Randomized SVD}
\author{Han Wang and James Anderson
\vspace{0.0625in}
\\
Columbia University}

\date{\today}

\maketitle

\pagenumbering{arabic}

\begin{abstract}
Learning a dynamical system from input/output data is a fundamental task in the control design pipeline. In the partially observed setting there are two components to identification: \emph{parameter estimation} to learn the Markov parameters, and \emph{system realization} to obtain a state space model. In both sub-problems it is implicitly assumed that standard numerical algorithms such as the singular value decomposition (SVD) can be easily and reliably computed. When trying to fit a high-dimensional model to data, for example in the cyber-physical system setting, even computing an SVD is intractable. In this work we show that an approximate matrix factorization obtained using randomized methods can replace the standard SVD in the  realization algorithm while maintaining the non-asymptotic (in  data-set size) performance and robustness guarantees of classical methods. Numerical examples illustrate that for large system models, this is the only method capable of producing a model.
\end{abstract}

\section{Introduction}\label{sec:introduction}

System identification is the process of estimating parameters of a dynamical system from observed trajectories and input profiles. It is a fundamental component in the control design pipeline as many modern optimal and robust control synthesis methodologies rely on having access to a dynamical system model. Traditionally, system identification ~\cite{deistler1995consistency, peternell1996statistical, jansson1998consistency, bauer1999consistency, knudsen2001consistency}
was limited to asymptotic analysis, i.e., estimators were shown to be consistent under the assumption of infinite data. However, recent results~\cite{oymak2019non, tu2017non, sarkar2019finite, simchowitz2018learning, simchowitz2019learning} focus on the more challenging task of analyzing the finite sample setting. Theoretically, this type of analysis is more involved and requires tools from high dimensional probability and statistics.

In this paper, we consider the problem of identifying a discrete-time, linear time-invariant (LTI) system parameterized by the matrices $A\in \R^{n\times n}, B \in \R^{n\times m}, C\in \R^{p\times n}$ and $D\in \R^{p\times m}$, that evolves according to
\begin{equation}\label{eq:LTI}
\begin{aligned}
x_{t+1} &=A x_{t}+B u_{t}+ w_{t} \\
y_{t} &=C x_{t}+D u_{t}+v_{t},
\end{aligned}
\end{equation}
from $N< \infty$   observations of the output signal $\{y_t^{i}\}_{t=0}^{T}$ and control signal $\{u_t^{i}\}_{t=0}^T$ of length $T$ for $i=1, \cdots, N$. The vectors  $x_{t} \in \mathbb{R}^{n}, w_{t} \in \mathbb{R}^{n}$, and  $v_{t} \in \mathbb{R}^{p}$  in~\eqref{eq:LTI} denote the system state, process noise, and measurement noise at time $t$, respectively, the superscript denotes the output/input channel. In this setting, the problem is  referred to as being \emph{partially observed}.  A more simplistic setting occurs when one has access to (noisy) state measurements, i.e., $C=I, D=0,\ \text{and} \ v_t =0$ for all $t$. This is referred to as the \emph{fully observed} problem.

In the  fully observed setting, estimates for $(A,B)$ can be obtained by solving ordinary least-squares (OLS) optimization problems. A series of recent papers~\cite{abbasi2011online,simchowitz2018learning,sarkar2019finite,faradonbeh2018finite,dean2020sample} have derived non-asymptotic guarantees for ordinary least-squares (OLS) estimators. In the case of partially observed systems, which is conceptually more complicated than the fully observed case, OLS optimization can be used to estimate the \emph{Markov parameters} associated with~\eqref{eq:LTI} from which the Ho-Kalman algorithm~\cite{ho1966effective} can be employed to estimate the system parameters $(A,B,C,D)$. The process of obtaining estimates of the system matrices from the Markov parameters is referred to as \emph{system realization} which is the main focus of this paper. Using this framework, the authors of \cite{sarkar2019finite,oymak2019non,zheng2020non,lee2020improved,sun2020finite,tsiamis2019finite} have derived  non-asymptotic estimation error bounds for the system parameters which decay at a rate $O( \frac{1}{\sqrt{N}})$.  Note that these papers make different assumptions about the stability, system order, and the number of required trajectories to excite the unknown system. 

However, in contrast to the estimation error bounds, the computational complexity of system identification has received much less attention in the literature~\cite{sznaier2020control,reyhanian2021online}. Due to the fact that  the OLS problem is convex, and the computational bulk of the Ho-Kalman Algorithm is a singular value decomposition (SVD), it is taken for granted that system identification can be carried out at scale. As mentioned in~\cite{sznaier2020control}, with the increase of system dimension, the computational and storage costs of general control algorithms quickly become prohibitively large.  This challenge motivates us to design control algorithms that mitigate the ``curse of dimensionality''. In this paper, we aim to design an efficient and scalable system realization algorithm that can be deployed in the big data regime.

From the view of computational complexity, the system identification methods proposed in \cite{sarkar2019finite,oymak2019non,zheng2020non,lee2020improved,sun2020finite,tsiamis2019finite} are not scalable since the size of the Hankel matrix
increases quadratically with the length of output signal $T$ and cubically with the system state dimension $n$. The result is the  singular value decomposition used in  the  Ho-Kalman Algorithm cannot be computed. This quadratic/cubic dependence on the problem size greatly limits its application in large scale system identification problems. 

Motivated by the limits of the scalability  of numerical SVD computations,  there has been a surge of work which has focussed on providing approximate, but more easily computable matrix factorizations. Thanks to advances in our understanding of random matrix theory and high dimensional probability (in particular, concentration of measure), \emph{randomized methods} have been shown to provide an excellent balance between numerical implementation  (in terms of storage requirements and computational cost) and accuracy of approximation (in theory and practice). Broadly speaking this field is referred to as randomized numerical linear algebra (RNLA), and we refer the reader to ~\cite{voronin2015rsvdpack,halko2011finding, musco2015randomized} and the references therein for an overview of the field. In particular, the machine learning and optimization communities have started to adopt RNLA methods into their work flows with great success, see for example~\cite{PilW17,feng2018faster}.

The intuition is that randomized methods can produce efficient, unbiased approximations of nonrandom operations while being numerically efficient to implement by exploiting modern computational architectures such as parallelization and streaming. The RNLA framework we follow  (see~\cite{Woo14} for \emph{sketching}-based alternatives, or~\cite{KanV17} for random column sampling approaches) is a three step process ~\cite{halko2011finding}. First,  sample the range of the target matrix by carrying out a sequence of matrix-vector multiplications, where the vector is an ensemble of random variables. Next, an approximate low-dimensional basis for the range is computed. Finally, an exact matrix factorization in the low dimensional subspace is computed. The performance of the  randomized SVD (RSVD) has been studied in many works~\cite{voronin2015rsvdpack,halko2011finding, musco2015randomized,feng2018faster} and has found applications in large-scale problems across machine learning~\cite{yao2018accelerated}, statistics~\cite{drineas2016randnla}, and signal processing~\cite{oh2017fast}. 

The main contribution of this work is a stochastic Ho-Kalman Algorithm, where the standard SVD (which constitutes the  main computational bottleneck of the algorithm) is replaced with an RSVD algorithm, which  trades off accuracy and robustness for speed. We show that the stochastic Ho-Kalman Algorithm achieves the same robustness guarantees as its deterministic, non-asymptotic version in expectation. However, it outperforms the deterministic algorithm in terms of speed/computational complexity, which is measured by the total number of required floating-point operations (flops) [\cite{boyd2004convex}, \S C.1.1]. Compared with $ O(pmn^3)$ flops required by the deterministic algorithm, the stochastic Ho-Kalman Algorithm only requires $O(pmn^2 \log{n})$ flops. 

\section{Preliminaries and Problem Formulation}
Given a matrix $A \in \C^{m\times n}$, where $\C$ is the set of complex numbers.  $\lVert A \rVert$ denotes the spectral norm and $\|A\|_F$ denotes the Frobenius norm, i.e., $\|A\|=\sigma_1(A)$, where $\sigma_1$ is the maximum singular value of $A$, and $\|A\|_F= \sqrt{\Trace{(A^*A)}}$, where $A^*$ denotes the Hermitian transpose of $A$. The multivariate normal distribution with
mean $\mu$ and covariance matrix $\Sigma$ is denoted by $\mathcal{N}\left(\mu, \Sigma \right).$ A matrix is said to be \emph{standard Gaussian} if every entry is drawn independently from  $\mathcal{N}\left(0, 1\right)$. Symbols marked with a tilde  are associated to the stochastic Ho-Kalman algorithm, while those with hats are associated with the deterministic Ho-Kalman algorithm.

\subsection{Singular Value Decomposition}
The singular value decomposition of the matrix $A \in \C^{m \times n}$, factors it as $A=U \Sigma V^{*}$, where $U\in \C^{m \times m}$ and $V\in \C^{n \times n}$ are orthonormal matrices, and $\Sigma$ is an $m \times n$ real diagonal matrix with entries $\sigma_{1}, \sigma_{2}, \cdots, \sigma_{n}$ ordered such that $\sigma_{1} \geq \sigma_{2} \geq \cdots \geq \sigma_{n} \geq 0$. When $A$ is real, so are $U$ and $V$. The truncated SVD of $A$ is given by $U_r \Sigma_r V^{\top}_r (r<\min\{m,n\})$, where the matrices $U_{r}$ and $V_{r}$ contain only the first $r$ columns of $U$ and $V$, and $\Sigma_{r}$ contains only the first $r$ singular values from $\Sigma$.  According to the Eckart-Young theorem~\cite{eckart1936approximation}, the best rank $r$ approximation to $A$ in the spectral norm or Frobenius norm is given by
\begin{equation}\label{eq:trunc}
A_{[r]}=\sum_{i=1}^{r} \sigma_{i} u_{i} v_{i}^{\top}
\end{equation}
where $u_{i}$ and $v_{i}$ denote the $i^{\text{th}}$ column of $U$ and $V$, respectively. More precisely, 
\begin{align}\label{eq:low_rank_opt}
\underset{\rank(X)\le r}{\text{minimize}}& ~\|A-X\| = \sigma_{r+1},
\end{align}
and a minimizer is given by $X^{\star}= A_{[r]}$. The expression~\eqref{eq:low_rank_opt} concisely sums up the scalability issue we are concerned with: on the left hand side is non-convex optimization problem with no polynomial-time solution; on the right is a singular value which for large $m$ and/or $n$ cannot be computed. In the sequel we shall  see how randomized methods can use approximate factorization to resolve these issues.

\subsection{System Identification}
We consider the problem of identifying a linear system model defined by Eq~\eqref{eq:LTI} where ${u}_{t} \iid \mathcal{N}\left(0, \sigma_{u}^{2} {I}_{m}\right), {w}_{t} \iid \mathcal{N}\left(0, \sigma_{w}^{2} {I}_{n}\right)$, and ${v}_{t} \iid \mathcal{N}\left(0, \sigma_{v}^{2} {I}_{p}\right).$ We further assume that the initial state variable $x_0=0_n$  (although the dimension, $n$, in unknown \emph{a priori}). Under these assumptions, we generate $N$ pairs of  trajectory, each of length $T$. We record the data as
$$
\mathcal D^N_T = \left\{\left(y_{t}^{i}, u_{t}^{i}\right): 1 \leq i \leq N, 0 \leq t \leq T-1\right\},
$$ where $i$ denotes $i^{\text{th}}$ trajectory and $t$ denotes $t^{\text{th}}$ time-step in each trajectory. With the data $\mathcal D^N_T$ the system identification problem can be solved in two steps:
\begin{enumerate}
    \item \textbf{Estimation:} Given $\mathcal D_T^N$,  estimate the first $T$ Markov parameters of the system which are defined as 
    \begin{equation*}
G=\left[D,~CB, ~CAB, ~ \ldots, ~C A^{T-2} B\right] \in \mathbb{R}^{m \times T p}. 
\end{equation*} 
Ideally, the estimation algorithm will produce finite sample bounds of the form $\|G-\hat G\|\le \epsilon(N,T)$. This is typically achieved by solving an OLS problem (see Appendix~\ref{sec:est}).
\item \textbf{Realization:} Given an estimated Markov parameter matrix $\hat G$, produce  state-space matrices $(\hat A, \hat B, \hat C,\hat D)$ with guarantees of the form $\|A-\hat A\| \le \epsilon_A, \|B-\hat B\| \le \epsilon_B$, etc. This is most commonly done using the Ho-Kalman algorithm. 
\end{enumerate}

In data collection, we refer the input/output trajectory
$(y_t, u_t), t=0,\cdots,T-1$ as  a \emph{rollout}. There are two approaches to collect the data. The first method involves multiple rollouts  \cite{tu2017non,sun2020finite,dean2020sample,zheng2020non}, where the system is run  and restarted with a new input signal $N$ times. The second approach  is the \emph{single rollout} method \cite{oymak2019non,simchowitz2018learning,simchowitz2019learning,sarkar2019finite,sarkar2019near}, where  an input signal is applied from time $0$ to $N \times T -1$ without restart. As this paper only focuses on the realization step of the problem, we can use either of aforementioned methods for data collection.

\subsection{System realization via noise-free Markov matrix $G$} 
With an estimated Markov matrix in hand, we wish to reconstruct the system parameters $A,B,C$ and $D$. To achieve this goal, we employ the  Ho-Kalman Algorithm \cite{ho1966effective}. We first consider the noise-free setting, i.e., $G$ is known exactly. The main idea of the Ho-Kalman Algorithm is to construct and factorize a Hankel matrix derived from the  $G$. Specifically, we generate a Hankel matrix as follows:
\begin{equation*}
\mathcal{H}=
\begin{bmatrix}
C B & C A B & \ldots & C A^{T_{2}} B \\ C A B & C A^{2} B  & \ldots & C A^{T_{2}+1} B \\ C A^{2} B & C A^{3} B  & \ldots & C A^{T_{2}+2} B \\ \vdots & \vdots & \vdots & \vdots \\ C A^{T_{1}-1} B & C A^{T_{1}}B  & \ldots & C A^{T_{1}+T_{2}-1} B\end{bmatrix} ,
\end{equation*}
where $T = T_1 + T_2 + 1.$ We use $\mathcal{H}^{-}$  $(\mathcal{H}^{+})$ to denote the $p T_1 \times m T_2$ Hankel matrix created by deleting the last (first) block column of $\mathcal{H}.$ We  assume that 
\begin{enumerate}
    \item the system~\eqref{eq:LTI} is observable and controllable, and
    \item $n = \text{rank}(\mathcal{H}) \le \min \{T_1, T_2\}.$
\end{enumerate}
Under these two assumptions, we can ensure that $\mathcal{H}$ and $\mathcal{H}^{-}$ are of rank $n$. We note that $\mathcal H^-$ can be factorized as
\begin{equation*}
\begin{aligned}
\mathcal{H}^{-}&=\begin{bmatrix}
C \\
C A \\
\vdots \\
C A^{T_{1}-1}
\end{bmatrix}
\begin{bmatrix}
B & A B & \ldots & A^{T_{2}-1} B
\end{bmatrix}\\
& := O Q,
\end{aligned}
\end{equation*}
where $O, Q$ denote the observability matrix and controllability matrix respectively.
We can also factorize $\mathcal{H}^{-}$ by computing its truncated SVD, i.e., $\mathcal{H}^{-}=U \Sigma_n V^T = (U {\Sigma_n}^{\frac{1}{2}})
({\Sigma_n}^{\frac{1}{2}} V^T).$ Therefore, the factorization of $\mathcal H^-$ establishes $O=U\Sigma_n^{\frac{1}{2}}, Q=\Sigma_n^{\frac{1}{2}}V^T.$ And doing so, we can obtain the system parameter $C$ by taking the first $p$ rows of $U {\Sigma_n}^{\frac{1}{2}}$ and the system parameter $B$ by taking the first $m$ columns of ${\Sigma_n}^{\frac{1}{2}} V.$ 
Then $A$ matrix can be obtained by $A=O^{\dagger} \mathcal{H}^{+} Q^{\dagger}$, where $(\cdot)^{\dagger}$ denotes the Moore-Penrose inverse.\footnote{The Moore-Penrose inverse of the matrix $A$ denoted by $A^\dagger$ is $V \Sigma^\dagger U^T$,  where $\Sigma^\dagger$ is formed by transposing $\Sigma$ and then taking the reciprocal of all the non-zero elements.
} Note that  $D$ is obtained without calculation since it is the first $p\times m$ submatrix of the Markov matrix $G$. It is worthwhile to mention that learning a state-space realization is a non-convex problem. There are multiple solutions yielding the same system input/output behavior and Markov matrix $G$. For example, if $(A,B,C,D)$ is a state-space realization obtained from  $G$, then $(SAS^{-1}, SB, CS^{-1},D)$ under a similarity transformation where $S$ is any  non-singular matrix is also a valid realization. 

\subsection{System realization via noisy Markov parameter $G$}
In the setting with noise, the same algorithm is applied to the estimated Markov matrix $\hat{G}$ (see Appendix~\ref{sec:est} for a simple method for obtaining $\hat G$ from $\mathcal D^N_T$), instead of the true matrix $G$. In this case the Ho-Kalman Algorithm will produce estimates $\hat{A},\hat{B},\hat{C}$ and $\hat{D}$.  The explicit algorithm is shown as Alg~\ref{alg:ho-kalman} (deterministic). It was shown in \cite{oymak2019non} that the robustness of the Ho-kalman Algorithm implies the estimation error for $A,B,C$ and $D$ is bounded by $O(\frac{1}{N^{1/4}})$, where $N$ is the sample size of trajectories, i.e.,
\begin{equation}
\begin{aligned}
\max &\left\{\lVert\hat{A}-S^{-1} A S\rVert, \lVert \hat{B}-S^{-1} B\rVert,\lVert \hat{C}-C S\rVert \right\} 
\leq  \hat{C}\sqrt{\lVert G-\hat{G}\rVert} =  O(\frac{1}{N^{1/4}})
\end{aligned}
\end{equation}
This result can be improved to $O(\frac{1}{\sqrt{N}})$ from~\cite{sarkar2019finite,tsiamis2019finite,lee2020improved}.

Note that the computational complexity of the Ho-Kalman Algorithm in Alg.~\ref{alg:ho-kalman} (deterministic) is dominated by the cost of computing the SVD (Step 7), which is $O(pT_1\times mT_2\times n)$ when using the Krylov method (see e.g. \cite{golub1996matrix,TrefB97}). Therefore, we want to use a small $T$ to reduce the computational cost. However, to satisfy the second assumption that $n = \text{rank}(\mathcal{H}) \le \min \{T_1, T_2\},$ where $T_1+T_2 +1 =T,$ the smallest $T$ we can choose is $2n+1$ with $T_1=T_2=n.$ In summary, the lowest achievable computational cost for SVD is $O(n^3).$ Such dependency on the system dimension is prohibitive for large-scale
 systems (e.g. systems with $n=100$ as we show in Section~\ref{sec:numerics}). Motivated by the drawbacks of the existing method,
we aim to answer the following question:
\begin{itemize}
    \item \textit{Is there a system realization method which can significantly reduce the computational complexity without sacrificing  robustness guarantees?}
\end{itemize}

The main  result of this paper is to answer this question in the affirmative. By leveraging randomized numerical linear algebra techniques described in the next section,  we design a stochastic version of the  Ho-Kalman  algorithm that is computationally efficient and produces competitive  robustness guarantees. An  informal version of our main results is given below:
\begin{thm}\textbf{(informal)}
The stochastic Ho-Kalman Algorithm reduces the computational complexity of the realization problem from $O(pmn^3)$ to $O(pmn^2 \log{n})$ when $T_1 = T_2 =\frac{n}{2}$. The achievable robustness is the same as deterministic Ho-Kalman Algorithm.

\end{thm}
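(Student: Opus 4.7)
The plan is to split the theorem into its two claims---the computational speedup and the preservation of the robustness guarantee---and handle them in sequence. In Algorithm~\ref{alg:ho-kalman} (deterministic) the only step whose cost is super-quadratic in $n$ is the rank-$n$ SVD of $\hat{\mathcal{H}}^-\in\mathbb{R}^{pT_1\times mT_2}$; every other operation (forming the Hankel blocks, extracting submatrices for $\hat C$ and $\hat B$, and forming $\hat A = O^\dagger \hat{\mathcal{H}}^+ Q^\dagger$) is already $O(pmn^2)$ or cheaper once $T_1=T_2=n/2$ is fixed, so proving the claim reduces to controlling the SVD substitution.

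For the complexity bound, I would replace the exact SVD with the prototype RSVD of~\cite{halko2011finding} instantiated with a structured random test matrix $\Omega\in\mathbb{R}^{mT_2\times(n+\ell)}$ for a small fixed oversampling parameter $\ell$. Using a subsampled random Hadamard or Fourier transform, the sketch $Y=\hat{\mathcal{H}}^-\Omega$ is formed in $O(pT_1\cdot mT_2\log(n+\ell))=O(pmn^2\log n)$ flops. A thin QR factorization of $Y$, the projection $B=Q^*\hat{\mathcal{H}}^-$, and an exact SVD of the small matrix $B$ together cost $O(pmn^2)$. Summing these terms yields the advertised $O(pmn^2\log n)$ total, versus the $O(pmn^3)$ cost of the deterministic pipeline.

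For the robustness part, I would invoke the expected spectral-norm error bound for the structured RSVD (e.g.~\cite{halko2011finding}, Theorem~10.9): there exists an absolute constant $C=C(\ell)$ with $\mathbb{E}\|\hat{\mathcal{H}}^- - \tilde{\mathcal{H}}^-_{[n]}\|\leq C\,\sigma_{n+1}(\hat{\mathcal{H}}^-)$. Because the true Hankel $\mathcal{H}^-$ has rank exactly $n$ under the stated assumptions, Weyl's inequality gives $\sigma_{n+1}(\hat{\mathcal{H}}^-)\leq\|\hat{\mathcal{H}}^- - \mathcal{H}^-\|\leq\|\hat G - G\|$, and the triangle inequality then yields $\mathbb{E}\|\mathcal{H}^- - \tilde{\mathcal{H}}^-_{[n]}\|\leq (1+C)\|\hat G - G\|$. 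Feeding this bound into the Ho-Kalman perturbation analysis of~\cite{oymak2019non}---which controls $\max\{\|\hat A-S^{-1}AS\|,\|\hat B-S^{-1}B\|,\|\hat C-CS\|\}$ purely through $\|\mathcal{H}^- - \hat{\mathcal{H}}^-_{[n]}\|$ via Wedin's theorem applied to the balanced factors $U\Sigma^{1/2}$ and $\Sigma^{1/2}V^*$---reproduces the same $O(\|G-\hat G\|^{1/2})$ (or sharpened $O(\|G-\hat G\|)$) robustness rate, now in expectation over the random sketch.

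The main obstacle I expect is in the Wedin step: the RSVD basis $\tilde U$ is \emph{not} the top-$n$ left singular basis of $\hat{\mathcal{H}}^-$, so the subspace distance between $\tilde U$ and the true $U$ does not follow directly from the deterministic perturbation bound. I would split the error as $d(\tilde U, U)\le d(\tilde U,\hat U)+d(\hat U,U)$, where $\hat U$ is the top-$n$ left singular basis of $\hat{\mathcal{H}}^-$, controlling the first term through the RSVD range-approximation error together with the spectral gap of $\hat{\mathcal{H}}^-$ at index $n$, and bounding the second by the standard Wedin inequality. Keeping everything in expectation, rather than with high probability, forces a careful tracking of the oversampling parameter $\ell$ and a Jensen-type step when converting the subspace bounds into the final per-matrix bounds for $\hat A,\hat B,\hat C$; once that technical bridge is in place, the rest of the Ho-Kalman robustness argument goes through essentially verbatim.
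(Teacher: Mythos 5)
Your overall architecture matches the paper's: the complexity claim is reduced to the cost of the range-finding sketch (with the $\log n$ only available for structured test matrices such as the SRFT, exactly as the paper concedes in its appendix and Table~\ref{tab:complexity}), and the robustness claim is reduced to a perturbation bound $\E\lVert L-\tilde L\rVert \lesssim \lVert G-\hat G\rVert$ obtained by the triangle inequality, Theorem~\ref{thm1} applied to $\hat{H}^-$, and the observation that $\sigma_{n+1}(\hat H^-)\le \lVert \hat H^- - H^-\rVert$ because $H^-$ has exact rank $n$. That is precisely the chain in the proof of Lemma~\ref{lem3}. One slip there: you bound $\lVert \hat H^- - H^-\rVert$ by $\lVert \hat G - G\rVert$ directly, whereas each Markov block is repeated across (anti)diagonals of the Hankel matrix, so the correct bound (Lemma~\ref{lemma1}) carries a factor $\sqrt{\min\{T_1,T_2\}}=\sqrt{n/2}$; this changes constants, not the rate in $N$, but it should not be omitted.

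Where you genuinely diverge is the last step, and your route is harder than the paper's. You propose to push the error through Wedin's theorem, comparing the RSVD basis $\tilde U$ to the exact top-$n$ basis $\hat U$ of $\hat H^-$ and then $\hat U$ to $U$; you correctly identify that $d(\tilde U,\hat U)$ does not follow from deterministic perturbation theory and that a spectral gap of $\hat H^-$ at index $n$ would enter. The paper avoids this entirely: following \cite{oymak2019non}, it invokes Theorem~5.14 of \cite{tu2016low} (Lemma~\ref{lemma4} here), which for \emph{any} two rank-$n$ matrices $L,\tilde L$ with $\sigma_{\min}(L)\ge 2\lVert L-\tilde L\rVert$ produces a unitary $S$ aligning the balanced factors $U\Sigma^{1/2}$ and $\tilde U\tilde\Sigma^{1/2}$ with Frobenius error $\sqrt{5n\lVert L-\tilde L\rVert}$. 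It does not matter that $\tilde U$ is not a singular subspace of $\hat H^-$, and no gap $\sigma_n(\hat H^-)-\sigma_{n+1}(\hat H^-)$ is needed --- only the robustness condition $\E\lVert L-\tilde L\rVert\le\sigma_{\min}(L)/2$ on the \emph{true} $L$. Your two-stage subspace argument can probably be made to work (under the same robustness condition the gap of $\hat H^-$ is bounded below), but it costs extra conditions and extra work converting subspace angles back into bounds on the balanced factors; the factorization-alignment lemma is what makes the paper's Theorem~\ref{thm5} a near-verbatim repetition of the deterministic proof. Finally, a shared looseness worth flagging: with a dense $\hat H^-\in\R^{pn/2\times mn/2}$, forming $P^*\hat H^-$ in $\rsvd$ and $\hat O^\dagger\hat H^+$ in the realization step each cost $O(pmn^3)$ if done naively, so the advertised $O(pmn^2\log n)$ total requires either the row-extraction variant of the RSVD or exploiting the Hankel structure; your claim that ``every other operation is already $O(pmn^2)$'' is not literally true, though the paper is equally silent on this point.
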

In the next section we introduce the randomized methods and their theoretical and numerical properties. We then incorporate them into the Ho-Kalman algorithm and analyze its performance.

\begin{algorithm}
\caption{ Stochastic/Deterministic Ho-Kalman Algorithm} \label{alg:ho-kalman}
    \begin{algorithmic}[1]
    \State \textbf{Input:} Length $T$, Estimated Markov parameters $\hat{G}$, system order $n$, $(T_1, T_2)$ satisfying $T_1 + T_2 + 1 = T$ 
    \State \textbf{Outputs}: State space realization $\hat{A}, \hat{B}, \hat{C},\hat{D}$
      \State Generate a Hankel matrix $\hat{H} \in \mathbb{R}^{pT_1 \times m(T_1 + 1)}$ from $\hat{G}$
     \State $\hat{H}^{-} = \hat{H}(:,1:m T_2)$ \ja{ \Comment{$\text{dim}(\hat{H}^-)=p T_1\times m T_2}$}
       \State $\hat{H}^+ = \hat{H}(:,m+1:m (T_2+1))$ 
    \If{Deterministic}
    \State $\hat{L} = \hat{H}^-_{[n]}$  \Comment{\ja{truncated SVD via \eqref{eq:trunc} }}
     \State $\hat{U}, \hat{\Sigma}, \hat{V} = \mathtt{SVD}(\hat{L})$
     \ElsIf{Stochastic}
     \State $\hat{U}, \hat{\Sigma}, \hat{V} = \mathtt{RSVD}(\hat{H}^{-}, n,l)$\footnotemark 
      \ja{\Comment{$\tilde{L}=\hat{U} \hat{\Sigma} \hat{V} \approx\hat{L}}$}
      \EndIf
    \State $\hat{O}  = \hat{U}\hat{\Sigma}^{1/2}$ \ja{\Comment{$\text{dim}(\hat{O}) = pT_1 \times n$}}
     \State $\hat{Q} = \hat{\Sigma}^{1/2} \hat{V}^*$ \ja{\Comment{$\text{dim}(\hat{Q})=n \times m T_2}$}
    \State $\hat{C} = \hat{O}(1:p,:),\hat{B} = \hat{Q}(:,1:m)$
        \State  $\hat{A} =  \hat{O}^\dagger \hat{H}^+ \hat{Q}^\dagger, \hat{D} = \hat{G}(:,1:m)$
      \State \textbf{Return} $\hat{A} \in \mathbb{R}^{n \times n}, \hat{B} \in \mathbb{R}^{n \times m}, \hat{C} \in \mathbb{R}^{p \times n}, \hat{D} \in \mathbb{R}^{p \times m}$
       \end{algorithmic}
\end{algorithm}
\footnotetext{In the following analysis, we will use $\tilde{U}, \tilde{\Sigma}, \tilde{V},\tilde{L}$ to denote the variables used in the stochastic Ho-Kalman Algorithm.}

\section{Randomized singular value decomposition}
The  numerical computation  of a singular value decomposition can be implemented in many ways. The structure of the matrix to be decomposed will likely play a role in determining which is the most  efficient algorithm. We do not attempt to review methods here as  the  literature is vast. In the system realization problem, the Ho-Kalman Algorithm  computes the SVD of $\mathcal{H}^-$, a dense truncated block Hankel matrix. To the best of our knowledge there are no  specialized algorithms for this  purpose. As such, we assume we are dealing with a general dense low rank matrix.  A brief comparison between standard numerical methods and the randomized methods to be introduced next is given Section~\ref{sec:comp}.

The objective of the RSVD it to produce matrices $U, \Sigma, V$, such that for a given matrix $A\in \C^{m\times n}$ with $\rank(A)=r<\min\{m,n\}$, and tolerance $\epsilon>0$, the bound
\begin{equation*}
\|A-U \Sigma V^*\|\le \epsilon
\end{equation*}
is satisfied where $U$ and $V$ have orthornormal columns and $\Sigma\in \R^{k\times k}$ is diagonal with $k<r$. 

Following~\cite{halko2011finding}, the RSVD of a matrix $A$ with target rank $k$ is computed in two  stages (full  implementation  details are provided in Algorithm~\ref{alg:svd}):
\begin{enumerate}
\item Find a matrix $P \in \mathbb{R}^{m \times k}$ with orthonormal columns such that the range of $P$ captures as much of the range of $A$ as possible. In other words, $A \approx PP^*A$.
\item Form the matrix $M = P^*A \in \mathbb{R}^{k \times n}$ and apply the standard numerical linear algebra technique to compute the SVD of $P$.
\end{enumerate}
Step 1 is the \emph{range finding} problem. This is where randomization enters picture. Let $\omega^{(i)}$ be a standard  Gaussian vector, and compute  $y^{(i)}=A\omega^{(i)}$. This can be viewed as a sample of $\ra(A)$. Repeating this process $k$ times and concatenating samples into matrices we have $Y=A\Omega$, an orthonormal basis for $Y$ can then be computed using  standard techniques, we use an economy QR decomposition. Again, we concatenate basis vectors $q_i$ into a matrix $P$. Because $k$ is selected to be small, this process is computationally tractable. When $\|A-PP^*A\|$ is small, $PP^*A$ is a  good rank-$k$ approximation of $A$. In  step 2, standard deterministic routines are called to compute the SVD of $M$. These routines are considered tractable as the the matrix $M$ has dimension $k\times m$ where $k$ is ideally much less than $r$. From the SVD of $M$, the matrices $U, \Sigma, V$ can be easily constructed (lines 7--8 of $\rsvd$).

\begin{algorithm}
\caption{ Randomized SVD: $\mathtt{RSVD}$} \label{alg:svd}
    \begin{algorithmic}[1]
     \State\textbf {Input:} an $m \times n \ \text{matrix} \ A, \text {a target rank} \ k$,
     \newline
     $\text{an oversampling parameter} \ l$
     \State \textbf {Output:} Approximate SVD s.t. $A\approx USV^T$
     \State $\Omega=\mathtt{randn}(n, k+l)$
\State ${P}=\mathtt{orth}({A} \Omega)$ \ja{\Comment{approx. basis for $\ra{(A)}$}}
\State ${M}={P}^{\mathrm{T}} {A}$ \ja{\Comment{$\text{dim}(M) = (k+l) \times n$}}
\State $[{U}, {S}, {V}]=\mathtt{svd}({M})$
\State ${U}={P U}$
\State ${U}={U}(:, 1: k), {S}={S}(1: k, 1: k), {V}={V}(:, 1: k)$ 
\State \textbf{Return} ${U} \in \mathbb{R}^{m \times k}, {S} \in \mathbb{R}^{k \times k}, {V} \in \mathbb{R}^{n \times k} $
 \end{algorithmic}
\end{algorithm}

 In practice, if the target rank is selected to be $k$, then one should sample the range of $A$ $k+l$ times where $l$ is   a small interger. Typically $l=10$ is more than sufficient~\cite{halko2011finding}. In $\rsvd$, $\Omega$ is chosen to be a standard Gaussian matrix. Surprisingly, the computational bottleneck of $\rsvd$ is the  matrix-vector multiplication in computing $A\Omega$ in  step 4. To reduce the computational cost of this step, we can choose other types of random matrices such as the subsampled random Fourier transform (SRFT) matrix  which reduces the flop count from $O(mn(k+l)$ to $O(mn\log(k+l))$ without incurring much  loss in accuracy (we extend our results to this setting in Appendix~\ref{sec:SRFT}). It should be further noted that the computation of $A\Omega$ is trivially parallelizable.

The $\mathtt{orth}$ function called on  line  4 of $\rsvd$ computes an orthonormal basis for the range of its argument. This can be done in many ways, here we use an economy QR decomposition.

\subsection{Power Scheme for slowly decaying spectra} When the input matrix $A$ has a flat  spectrum, $\rsvd$ tends to struggle to find a good approximate basis. To improve the accuracy a power iteration scheme is employed~\cite[p.~332]{golub1996matrix}. Loosely, the  power iteration  are based on the observation that the singular vectors of $A$ and  $(AA^*)^qA$ are the same, while the singular values with magnitude less than  one will rapidly shrink.  In other words, it can reduce the effect of noise. More precisely, we apply $\rsvd$ to the matrix 
$ W =\left(A A^{*}\right)^{q} A $, and we have $$\sigma_j(W) = \sigma_j(A)^{2q+1},\quad  j=1,2,\cdots$$ which shows that for $\sigma_j <1,$ the power iteration will provide singular values that decay more rapidly while the singular vectors remain unchanged. This will provide a more accurate approximation, however it will require $2q+1$ times as many matrix vector multiplies.  The  following theorem provides a bound on the accuracy of the approximation that $\rsvd$ provides. We will make heavy use of this result in the sequel.
\begin{thm}\label{thm1}~\cite{halko2011finding}
 Suppose that $A$ is a real $m \times n$ matrix with singular values $\sigma_{1} \geq \sigma_{2} \geq \sigma_{3} \geq \cdots$. Choose a target rank $k \geq 2$ and an oversampling parameter $l \geq 2$, where $k+l \leq \min \{m, n\} .$ Then $\rsvd$ called on   $W=(AA^*)^q A$ with target rank $k$, and oversampling  parameter $l$ produces an orthonormal approximate basis $P$ which satisfies 
 \begin{equation*}
 \begin{aligned}
 \mathbb{E}\lVert(A-PP^*A)\rVert 
  \leq\left[1+\sqrt{\frac{k}{l-1}}+\frac{e \sqrt{k+l}}{l} \cdot \sqrt{\min \{m, n\}-k}\right]^\frac{1}{2q+1} \sigma_{k+1}
 \end{aligned}
 \end{equation*}
 \normalsize
 where $e$ is Euler's number and $\mathbb{E}$ denotes expectation with respect to the random  matrix $\Omega$.
 \end{thm}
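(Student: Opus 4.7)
The plan is to reduce the analysis with power iteration to the basic (no power iteration) RSVD bound applied to the preprocessed matrix $W=(AA^*)^q A$, and then translate that bound back to $A$ through a deterministic projection inequality combined with Jensen's inequality.

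\textbf{Step 1 (invoke basic RSVD bound on $W$).} First I would establish (or cite, since the statement is attributed to \cite{halko2011finding}) the expected spectral-norm error of one pass of $\rsvd$ without power iteration. Applied to $W$ with target rank $k$ and oversampling $l$, this yields
\begin{equation*}
\mathbb{E}\|W-PP^{*}W\| \;\leq\; \left(1+\sqrt{\tfrac{k}{l-1}}\right)\sigma_{k+1}(W) + \tfrac{e\sqrt{k+l}}{l}\left(\sum_{j>k}\sigma_{j}(W)^{2}\right)^{1/2}.
\end{equation*}
The ingredients here are the deterministic decomposition of $\|(I-P)W\|$ in terms of how the Gaussian test matrix $\Omega$ acts on the top/bottom right singular subspaces of $W$, together with classical moment bounds for Gaussian matrices and their pseudoinverses restricted to a fixed subspace.

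\textbf{Step 2 (translate singular values back to $A$).} Since $W=(AA^*)^{q}A$ shares left and right singular vectors with $A$ and has singular values $\sigma_{j}(W)=\sigma_{j}(A)^{2q+1}$, I would bound the tail sum crudely as $\sum_{j>k}\sigma_{j}(W)^{2}\leq (\min\{m,n\}-k)\,\sigma_{k+1}(A)^{2(2q+1)}$. Substituting into the Step 1 bound and factoring out $\sigma_{k+1}(A)^{2q+1}$ gives
\begin{equation*}
\mathbb{E}\|W-PP^{*}W\| \;\leq\; C\,\sigma_{k+1}(A)^{2q+1}, \qquad C := 1+\sqrt{\tfrac{k}{l-1}}+\tfrac{e\sqrt{k+l}}{l}\sqrt{\min\{m,n\}-k}.
\end{equation*}

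\textbf{Step 3 (deterministic power-iteration inequality).} The crux of the argument is the pointwise estimate
\begin{equation*}
\|A-PP^{*}A\|^{2q+1} \;\leq\; \|W-PP^{*}W\|,
\end{equation*}
which must hold for every realization of the random projector $P$. I would prove this via the identity $\|(I-P)B\|^{2}=\|(I-P)BB^{*}(I-P)\|$ and the operator-monotonicity fact that $0\preceq X\preceq Y$ implies $\|X^{s}\|\leq\|Y^{s}\|$ for $s\geq 1$ on positive semidefinite matrices; iterating $q$ times on $B=A$ collapses the product structure and yields the claim. This is the main obstacle because it is the only non-routine inequality, and it is where the gain from the power scheme materializes.

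\textbf{Step 4 (expectation and Jensen).} Finally I would take expectations and apply Jensen's inequality using the concavity of $x\mapsto x^{1/(2q+1)}$ on $[0,\infty)$:
\begin{equation*}
\mathbb{E}\|A-PP^{*}A\| \;\leq\; \mathbb{E}\!\left[\|W-PP^{*}W\|^{1/(2q+1)}\right] \;\leq\; \bigl(\mathbb{E}\|W-PP^{*}W\|\bigr)^{1/(2q+1)} \;\leq\; C^{1/(2q+1)}\,\sigma_{k+1}(A),
\end{equation*}
which is precisely the claimed bound. Steps 1, 2, and 4 are essentially bookkeeping once the basic HMT estimate and Jensen are in hand; the genuinely delicate step is the deterministic inequality in Step 3, so I would spend the bulk of the write-up justifying that line via the PSD ordering of $(I-P)(AA^{*})^{q}A\,((AA^{*})^{q}A)^{*}(I-P)$ versus iterated powers of $(I-P)AA^{*}(I-P)$.
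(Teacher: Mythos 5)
The paper does not actually prove this statement; it is imported verbatim from Halko--Martinsson--Tropp (their Corollary 10.10), so there is no in-paper argument to compare against. Your reconstruction is, in outline, exactly the HMT proof: the basic expected spectral-error bound (their Theorem 10.6) applied to $W$, the crude tail estimate $\bigl(\sum_{j>k}\sigma_j(W)^2\bigr)^{1/2}\le \sqrt{\min\{m,n\}-k}\;\sigma_{k+1}(A)^{2q+1}$, the deterministic power-iteration inequality $\|(I-PP^*)A\|^{2q+1}\le\|(I-PP^*)W\|$ (their Proposition 8.6), and Jensen's inequality for the concave map $x\mapsto x^{1/(2q+1)}$. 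Steps 1, 2 and 4 are correct as written.

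The one soft spot is the mechanism you propose for Step 3, which you rightly identify as the crux. Writing $R=I-PP^*$ and $M=AA^*$, the inequality you need reduces to $\|RMR\|^{t}\le\|RM^{t}R\|$ with $t=2q+1$. Your sketch suggests deriving this from a PSD ordering between $(RMR)^{t}$ and $RM^{t}R$, but by the Hansen--Pedersen characterization such an ordering holds for all projectors only when $s\mapsto s^{t}$ is operator convex, which fails for $t\ge 3$; so the operator inequality $(RMR)^{t}\preceq RM^{t}R$ is not available in general. Fortunately only the norm inequality is needed, and it follows from a scalar Jensen argument: take a unit vector $x$ in the range of $R$ achieving $x^*Mx=\|RMR\|$, and apply convexity of $s\mapsto s^{t}$ to the spectral measure of $M$ at $x$ to get $(x^*Mx)^{t}\le x^*M^{t}x\le\|RM^{t}R\|$. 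With Step 3 justified this way (which is precisely how HMT prove their Theorem 8.5), your argument is complete.
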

Note that $\sigma_{k+1}$ is the theoretically optimal value in the deterministic setting. Thus the price of randomization is the given by the contents of the square brackets with exponent $\frac{1}{2q+1}$.

\textbf{Adaptive randomized range finder}. $\rsvd$ is implemented based on the assumption that the target rank $k$ is know \emph{a priori}. However, in practice, we may not know the true rank $k$ in advance. Therefore, it is desirable to design an algorithm that can find a matrix $P$ with as few orthonormal columns as possible such that 
$
\left\|\left({I}-P P^{*}\right) A\right\| \leq \varepsilon
$
where $\varepsilon$ denotes a given tolerance. The work in \cite{halko2011finding}, based on results from~\cite{WooLRT08} describes an  \emph{adaptive randomized ranger finder} that iteratively samples until the desired tolerance is obtained. It is worth noting that the CPU time requirements of $\rsvd$ and the adaptive version  are essentially identical. 

\subsection{Comparing RSVD with  SVD computaton}\label{sec:comp}
In this subsection, we briefly compare the cost of computing  a full singular value decomposition with that of computing an approximate decomposition. This  is a vast topic in numerical analysis and we refer the reader to ~\cite{TrefB97,golub1996matrix,li2019tutorial,halko2011finding,wall2003singular} for implementation and complexity details. We assume  that the  matrix we wish to factor is  dense with dimension $m\times n$. Broadly speaking, there are two approaches to computing an SVD: direct methods  based on a QR factorization, and iterative methods such as the  Krylov family  of algorithms. For computing an approximate  SVD, one can  use  the the truncated method~\eqref{eq:trunc} which involves computing a full SVD (which may not be  practical for large $m,n$), a rank-revealining QR decomposition  can be used, and finally randomized methods can be used.

In addition to the time complexity of the algorithm, it is also important  to consider how many times the matrix is read into memory. For large matrices, the cost of loading the matrix into memory outweighs  the computational cost. As such the number of passes over the matrix required by the algorithm needs to be  considered. We summarize the complexity of these algorithms in Table~\ref{tab:complexity}. The parameter $k$ indicates the target rank for approximate methods.

\begin{table}[htp]
    \centering
    \normalsize
    \begin{tabular}{llll}
    \hline
      Method   &  Time complexity  & Passes\\ \hline
      Full SVD ~\cite{trefethen1997numerical}& $O\left(mn \text{min}(m,n)\right)$ &$\min{(m,n)}$ \\ \hdashline
      Krylov~\cite{trefethen1997numerical}  & $O\left(mnk\right)$ & $k$ \\
      Truncated~ \cite{trefethen1997numerical} & $O\left(mn \text{min}(m,n)\right)$ &  $k$ \\ 
      RSVD ~\cite{halko2011finding}& $O\left(m n \log k\right)$  &  $1$ \\
      \hline
    \end{tabular}
    \caption{Comparison of SVD computation complexity. Methods below the dashed line are for computing an approximate SVD.}
    \label{tab:complexity}
\end{table}

From Table 1, we observe that RSVD is the fastest SVD algorithm among the aforementioned, especially when $m,n \gg k$. In term of space complexity, although they have the same dominant term, RSVD only requires one pass through the data while other  methods require multiple passes, which is prohibitively expensive for huge 
matrices. When the power iteration  is implemented in the $\rsvd$ algorithm, the  number of passes changes to $O(1)$.

\section{Main Results}
The \emph{stochastic Ho-Kalman} algorithm we propose replaces the deterministic singular value decomposition and truncation (lines 7--8, Algorithm~\ref{alg:ho-kalman}) with a single approximate randomized SVD (line 10, Algorithm~\ref{alg:ho-kalman}) obtained using $\rsvd$. Proofs of all results are deferred to the appendix. For  the remainder of the paper, symbols with a tilde denote that they were obtained from the stochastic Ho-Kalman Algorithm, while symbols with a hat denote that they were obtained from the deterministic Ho-Kalman Algorithm. Finally, symbols with neither have been  obtained from the ground truth Markov matrix $G$.

From \cite{oymak2019non}, we have the following perturbation bounds for the deterministic Ho-Kalman Algorithm:
\begin{lem}\label{lemma1}~\cite{oymak2019non}
The matrices ${H}, \hat{{H}}$ and ${L}, \hat{{L}}$ satisfy the following perturbation bounds:
\begin{itemize}
\item $\max \Big \{ \lVert {H}^{+}-\hat{{H}}^{+}\rVert, \lVert {H}^{-}-\hat{{H}}^{-}\rVert \Big \} \leq \lVert {H}-\hat{{H}} \rVert
\leq \sqrt{\min \{T_{1}, T_{2}+1\}} \lVert {G}-\hat{{G}}\rVert .$
\item $ \lVert {L}-\hat{{L}}\rVert \leq 2\lVert{H}^{-}-\hat{{H}}^{-}\rVert \leq  2 \sqrt{\min \{ T_{1}, T_{2}\}} \lVert {G}-\hat{{G}}\rVert .$
\end{itemize}
\end{lem}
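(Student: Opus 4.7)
The plan is to handle the two bullets separately, starting with the submatrix bound and the Hankel-to-Markov bound in the first, and then using standard perturbation arguments (Eckart--Young plus Weyl) for the second.

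For the first inequality in the first bullet, observe that $H^-$ and $H^+$ are obtained from $H$ by deleting a single block column, so $H^- - \hat{H}^-$ and $H^+ - \hat{H}^+$ are submatrices of $H - \hat{H}$; the spectral norm of a submatrix is bounded by the spectral norm of the full matrix, which gives the desired inequality. For the second inequality I would exploit the block-Hankel structure of $E := H - \hat{H}$. Writing $\Delta := G - \hat{G}$, each block row of $E$ has the form $F_i = [\Delta_{i+1}, \Delta_{i+2}, \ldots, \Delta_{i+T_2+1}]$, i.e., a window of $T_2+1$ consecutive blocks of $\Delta$, and each block column has the form $E_j = [\Delta_{j+1};\ldots;\Delta_{j+T_1}]$, a window of $T_1$ consecutive blocks. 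Since $F_i F_i^* = \sum_{k} \Delta_{i+k}\Delta_{i+k}^* \preceq \Delta \Delta^*$ in the PSD order, we have $\|F_i\| \le \|\Delta\|$, and similarly $\|E_j\| \le \|\Delta\|$. Then using the identity $\|E\|^2 = \|E^*E\| = \|\sum_j E_j^* E_j\| \le \sum_j \|E_j\|^2$ (and the analogous identity via block rows), we get
\begin{equation*}
\|E\|^2 \le \min\{T_1,\, T_2+1\}\,\|\Delta\|^2,
\end{equation*}
which yields the claimed bound after taking square roots.

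For the second bullet, I would apply the triangle inequality,
\begin{equation*}
\|L-\hat L\| \;=\; \|H^- - \hat H^-_{[n]}\| \;\le\; \|H^- - \hat H^-\| + \|\hat H^- - \hat H^-_{[n]}\|,
\end{equation*}
using that $H^-$ has rank exactly $n$ by assumption, so $L = H^-_{[n]} = H^-$. The second summand is exactly $\sigma_{n+1}(\hat H^-)$ by the Eckart--Young theorem \eqref{eq:low_rank_opt}. Since $H^-$ has rank $n$, $\sigma_{n+1}(H^-) = 0$, and Weyl's inequality gives $\sigma_{n+1}(\hat H^-) \le \|H^- - \hat H^-\|$. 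Substituting yields $\|L - \hat L\| \le 2 \|H^- - \hat H^-\|$, and applying the first bullet (with $T_2$ columns in place of $T_2+1$, since $H^-$ has one fewer block column than $H$) closes the chain to $2\sqrt{\min\{T_1,T_2\}} \|G - \hat G\|$.

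The only step that requires a bit of care is the block-wise PSD domination $F_i F_i^* \preceq \Delta\Delta^*$: it relies on the fact that the window of blocks appearing in a row (or column) of the Hankel matrix is a contiguous sub-selection of the blocks of $\Delta$, so the missing terms in the sum are PSD and can only increase the total. Once this observation is in place, both bullets follow from standard matrix inequalities, so I expect no serious obstacles, just careful bookkeeping of indices to match the $T_1$ vs.\ $T_2+1$ (resp.\ $T_2$) asymmetry between $H$ and $H^-$.
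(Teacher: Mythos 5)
Your overall architecture is the standard one (block row/column decomposition of the Hankel difference for the first bullet; triangle inequality plus Eckart--Young plus Weyl for the second), and the second bullet is handled correctly: since $\operatorname{rank}(H^-)=n$ you indeed have $L=H^-$, Eckart--Young gives $\lVert \hat H^- - \hat H^-_{[n]}\rVert=\sigma_{n+1}(\hat H^-)$, and Weyl gives $\sigma_{n+1}(\hat H^-)\le\lVert H^--\hat H^-\rVert$, so the factor $2$ is right. Note the paper itself imports this lemma from the cited reference without proof, so there is no in-paper argument to compare against; your row-direction argument is exactly the standard one.

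The genuine gap is in the column half of the first bullet---precisely the step you flagged as needing care. For a block row $F_i=[\Delta_{i+1},\dots,\Delta_{i+T_2+1}]$ the domination $F_iF_i^*=\sum_k\Delta_{i+k}\Delta_{i+k}^*\preceq\sum_{\text{all }k}\Delta_k\Delta_k^*=\Delta\Delta^*$ is valid because $\Delta=G-\hat G$ is itself a \emph{horizontal} concatenation of its blocks. For a block column $E_j=[\Delta_{j+1};\dots;\Delta_{j+T_1}]$ the analogous computation gives $E_j^*E_j=\sum_k\Delta_{j+k}^*\Delta_{j+k}\preceq\sum_{\text{all }k}\Delta_k^*\Delta_k$, but the matrix on the right is the Gram matrix of the \emph{vertical} stacking $[\Delta_0;\dots;\Delta_{T-1}]$, not of $\Delta$; it has the wrong dimensions to be compared with $\Delta^*\Delta$, and its norm is not controlled by $\lVert\Delta\rVert^2$. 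Concretely, with $p=2$, $m=1$, $\Delta_1=e_1$, $\Delta_2=e_2$, the horizontal concatenation has spectral norm $1$ while the vertical stack $[e_1;e_2]$ has norm $\sqrt{2}$. So ``similarly $\lVert E_j\rVert\le\lVert\Delta\rVert$'' does not follow, and what your argument actually establishes is $\lVert H-\hat H\rVert\le\sqrt{T_1}\,\lVert G-\hat G\rVert$, together with $\sqrt{T_2+1}$ times the norm of the \emph{vertical} concatenation of the blocks of $G-\hat G$---not the stated $\sqrt{\min\{T_1,T_2+1\}}\,\lVert G-\hat G\rVert$. For the standard split $T_1=\lceil T/2\rceil$, $T_2+1=\lfloor T/2\rfloor$ the two quantities differ by at most one, so nothing is lost in the downstream bounds, but as a proof of the lemma as stated the column half needs either the vertical-concatenation norm on the right-hand side or a different argument; the same caveat applies to the $\sqrt{\min\{T_1,T_2\}}$ factor in the second bullet.
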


We will now use Theorem~\ref{thm1} and Lemma~\ref{lemma1} to provide average and deviation bounds on the performance of the stochastic Ho-Kalman algorithm. 
\begin{lem}\label{lem3} \textbf{(Average perturbation bound)} Denote $l\ge 2$ to be the oversampling parameter used in $\rsvd$. Run the Stochastic Ho-Kalman Algorithm with a standard Gaussian matrix $\Omega \in \mathbb{R}^{mT_2 \times(n+l)}$ in line 3 of $\rsvd$, where $n+l \le \min\{pT_1, mT_2\}.$ Then ${L}, \tilde{{L}}$ satisfy the following perturbation bound:
\begin{equation}\label{eq:eq2}\small
\begin{aligned}
\E\lVert L - \tilde{L}\rVert 
\le 2C_2\Big (2 + \sqrt{\frac{n}{l-1}} + \frac{e\sqrt{n+l}}{l} C_1  \Big) 
 \lVert G-\hat{G}\rVert 
\end{aligned}
\end{equation}
\normalsize
where 
\begin{equation*}
C_1= \sqrt{\min\{pT_1, mT_2\} -n},
\end{equation*}
 and
 \begin{equation*}
 C_2 =\sqrt{\min \{ T_{1}, T_{2}\}}.
\end{equation*}
Furthermore, if we exploit the power scheme with $\rsvd$, then the right-hand side of \eqref{eq:eq2} can be improved to
\begin{equation}\label{eq:eq3}\small
\begin{aligned}
4 C_2 \Big (1 + \frac{1}{2}\sqrt{\frac{n}{l-1}} + \frac{e\sqrt{n+l}}{2l} C_1  \Big)^{1/(2q+1)}   \lVert G-\hat{G}\rVert .
\end{aligned}
\end{equation}
\normalsize
\end{lem}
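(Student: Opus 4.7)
The plan is to bound $\lVert L-\tilde{L}\rVert$ by inserting the deterministic rank-$n$ truncation $\hat{L}$ as a triangle-inequality pivot, then control each resulting piece using Lemma~\ref{lemma1} for the ``data noise'' part and Theorem~\ref{thm1} for the ``randomization'' part, with Weyl's inequality tying the singular-value quantities back to $\lVert G-\hat{G}\rVert$.

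First I would write
\begin{equation*}
\lVert L-\tilde{L}\rVert \;\le\; \lVert L-\hat{L}\rVert + \lVert \hat{L}-\tilde{L}\rVert.
\end{equation*}
The first summand is purely deterministic and is already delivered by Lemma~\ref{lemma1}: $\lVert L-\hat{L}\rVert\le 2C_2\lVert G-\hat{G}\rVert$. For the randomized piece, I would pivot through $\hat{H}^-$:
\begin{equation*}
\lVert \hat{L}-\tilde{L}\rVert \;\le\; \lVert \hat{L}-\hat{H}^-\rVert + \lVert \hat{H}^- - \tilde{L}\rVert.
\end{equation*}
Since $\hat{L}=\hat{H}^-_{[n]}$, Eckart--Young gives $\lVert \hat{L}-\hat{H}^-\rVert=\sigma_{n+1}(\hat{H}^-)$. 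For the second term, $\tilde{L}$ is (up to the final rank-$n$ truncation in $\rsvd$) equal to $PP^*\hat{H}^-$, so Theorem~\ref{thm1} with $q=0$ yields
\begin{equation*}
\E\lVert \hat{H}^- - \tilde{L}\rVert \;\le\; \Bigl[1+\sqrt{\tfrac{n}{l-1}}+\tfrac{e\sqrt{n+l}}{l}C_1\Bigr]\sigma_{n+1}(\hat{H}^-).
\end{equation*}
The truncation inside $\rsvd$ costs at most an extra $\sigma_{n+1}(PP^*\hat{H}^-)\le\sigma_{n+1}(\hat{H}^-)$ (using $\lVert PP^*\rVert\le 1$), which can be absorbed into the bracket constant.

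To convert $\sigma_{n+1}(\hat{H}^-)$ into a bound in $\lVert G-\hat{G}\rVert$, I would invoke Weyl's inequality: because $H^-$ has rank exactly $n$ under the paper's standing assumptions, $\sigma_{n+1}(H^-)=0$ and therefore
\begin{equation*}
\sigma_{n+1}(\hat{H}^-)\;\le\;\lVert \hat{H}^- - H^-\rVert \;\le\; C_2\lVert G-\hat{G}\rVert,
\end{equation*}
where the second inequality is the submatrix bound in Lemma~\ref{lemma1}. Substituting back and collecting constants gives
\begin{equation*}
\E\lVert L-\tilde{L}\rVert \;\le\; \Bigl(2+1+1+\sqrt{\tfrac{n}{l-1}}+\tfrac{e\sqrt{n+l}}{l}C_1\Bigr)C_2\lVert G-\hat{G}\rVert,
\end{equation*}
which is upper-bounded by the stated \eqref{eq:eq2} after the loose estimate $4+(\cdot)\le 2(2+(\cdot))$. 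For the power-scheme refinement \eqref{eq:eq3}, the only modification is that Theorem~\ref{thm1} now supplies the bracket raised to $1/(2q+1)$; the rest of the argument is unchanged and produces the refined factor.

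The main obstacle I anticipate is bookkeeping around the $\rsvd$ truncation: ensuring that Theorem~\ref{thm1}, which is stated for $\lVert A-PP^*A\rVert$, cleanly upgrades to $\lVert A-\tilde{L}\rVert$ with only an explicit constant penalty. A secondary nuisance is tracking whether the correct $T_2$-dependent constant is $C_2=\sqrt{\min\{T_1,T_2\}}$ or $\sqrt{\min\{T_1,T_2+1\}}$ at each use of Lemma~\ref{lemma1}; I expect this to be absorbable into the same $C_2$ since the discrepancy is a lower-order term. Everything else reduces to triangle inequalities, Weyl, and a direct appeal to Theorem~\ref{thm1}.
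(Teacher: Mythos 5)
Your proof is correct and follows essentially the same route as the paper's: Theorem~\ref{thm1} with $q=0$ applied to $\hat{H}^-$, the Eckart--Young/Weyl observation $\sigma_{n+1}(\hat{H}^-)\le\lVert\hat{H}^--H^-\rVert$ (valid because $H^-$ has rank $n$), and Lemma~\ref{lemma1} to convert everything into $\lVert G-\hat{G}\rVert$. The only difference is your extra pivot through $\hat{L}$ (the paper goes directly from $L=H^-$ to $\hat{H}^-$, saving one triangle-inequality term), which costs a slightly larger additive constant that, as you note, still fits under the stated factor of $2$ in \eqref{eq:eq2}.
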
 
\begin{proof}
See appendix.
\end{proof}
From \eqref{eq:eq2}, we have that the perturbation bound is determined by the ratio between the target rank $n$ and the oversampling parameter $l$. The error is large if $l$ is small. In practice, it is sufficient to use $l=5$ or $l=10$. And there is rarely any advantage to select $l>10$~\cite{halko2011finding}. In addition, from~\eqref{eq:eq3}, we know that the bound will decrease if we increase the power parameter $q$. The effect of $l$ in terms of running time and realization error is studied further in Section V where we observe that the stochastic Ho-Kalman algorithm is robust to the choice of $l$.
In case the average perturbation as characterized by Lemma~\ref{lem3} doesn't feel like a helpful quantity, a deterministic error bound  is also achievable: 

\begin{lem}\label{prob}\textbf{(Deviation bound)}
Let the assumptions of Lemma~\ref{lem3} hold. Assume $l \ge 4$ and let  $C_1$ and $C_2$ be defined  as in Lemma~\ref{lem3}. Then we have
\begin{equation}\label{eq:eqq4}\small
\begin{aligned}
  \lVert L - \tilde{L}\rVert 
    \le 2C_2 \Big (2 + 16\sqrt{1+\frac{n}{l-1}} + \frac{8\sqrt{n+l}}{l+1} C_1 \Big) 
 \lVert G-\hat{G}\rVert 
\end{aligned}
\end{equation}
with failure probability at most $3e^{-l}.$ Moreover, 
\begin{equation}\label{eq:eq5}\small
\begin{aligned}
  \lVert L - \tilde{L}\rVert 
    \le C_2\Big (2 + 6\sqrt{(n+l)l\log l} + 3\sqrt{n+l} \ C_1 \Big)
 \lVert G-\hat{G}\rVert 
\end{aligned}
\end{equation}
with failure probability at most $3l^{-l}.$
\end{lem}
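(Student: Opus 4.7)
The plan is to replay the argument used for the average bound in Lemma~\ref{lem3}, but substitute the Halko--Martinsson--Tropp deviation bounds (\cite{halko2011finding}, Corollaries~10.9 and~10.10) for the expectation bound of Theorem~\ref{thm1}; the latter is obtained by integrating the former, so all ingredients are already present. First I would split
\begin{equation*}
\lVert L - \tilde{L}\rVert \le \lVert L - \hat{L}\rVert + \lVert \hat{L} - \tilde{L}\rVert,
\end{equation*}
apply Lemma~\ref{lemma1} to bound $\lVert L - \hat{L}\rVert \le 2 C_2 \lVert G - \hat{G}\rVert$, and then control the randomized piece using the optimality of $\hat{L} = \hat{H}^-_{[n]}$ as the best rank-$n$ approximation of $\hat{H}^-$: since $\tilde{L}$ is also rank $n$, the triangle inequality gives
\begin{equation*}
\lVert \hat{L} - \tilde{L}\rVert \le 2\lVert \hat{H}^- - \tilde{L}\rVert \le 2\lVert \hat{H}^- - PP^*\hat{H}^-\rVert,
\end{equation*}
where $P$ is the orthonormal basis produced by $\rsvd$.

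Next I would invoke the two Halko--Martinsson--Tropp tail estimates: for $l \ge 4$,
\begin{equation*}
\lVert \hat{H}^- - PP^*\hat{H}^-\rVert \le \bigl(1 + 16\sqrt{1 + n/(l-1)}\bigr)\sigma_{n+1}(\hat{H}^-) + \tfrac{8\sqrt{n+l}}{l+1}\bigl(\textstyle\sum_{j>n}\sigma_j(\hat{H}^-)^2\bigr)^{1/2}
\end{equation*}
with failure probability at most $3e^{-l}$, and the analogous inequality with the constants $6\sqrt{(n+l)l\log l}$ and $3\sqrt{n+l}$ with failure probability at most $3l^{-l}$. I would then crudely bound the tail-of-spectrum term by $C_1 \sigma_{n+1}(\hat{H}^-)$ (there are at most $C_1^2$ remaining singular values, each $\le \sigma_{n+1}(\hat{H}^-)$), and convert $\sigma_{n+1}(\hat{H}^-)$ into a data-perturbation term by Weyl's inequality: since $\rank(H^-) = n$ forces $\sigma_{n+1}(H^-) = 0$,
\begin{equation*}
\sigma_{n+1}(\hat{H}^-) \le \lVert \hat{H}^- - H^-\rVert \le C_2 \lVert G - \hat{G}\rVert,
\end{equation*}
the last step again being Lemma~\ref{lemma1}. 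Chaining these inequalities through the original decomposition produces the two claimed inequalities.

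No step is conceptually difficult; the real work is bookkeeping the constants so that the prefactors in~\eqref{eq:eqq4} and~\eqref{eq:eq5} come out exactly. In particular, matching the tighter prefactor in~\eqref{eq:eq5} requires exploiting $L = H^-$ (a direct consequence of $\rank(H^-) = n$), which lets one replace the $2C_2 \lVert G - \hat{G}\rVert$ contribution above by the single $C_2 \lVert G - \hat{G}\rVert$ term coming from $\lVert H^- - \hat{H}^-\rVert$, sparing a factor of two. The only subtlety worth checking is that the Gaussian sketch $\Omega$ in $\rsvd$ is drawn independently of $\hat{G}$, so that the Halko--Martinsson--Tropp tail probabilities apply conditionally on the data; this is automatic, as $\Omega$ is generated at realization time.
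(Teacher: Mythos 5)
Your proposal is correct and follows essentially the same route as the paper, whose proof of this lemma is simply a pointer: repeat the triangle-inequality decomposition from the proof of Lemma~\ref{lem3} (bounding $\lVert \hat{H}^- - \tilde{L}\rVert$ via the range-finder error and $\sigma_{n+1}(\hat{H}^-)\le\lVert \hat{H}^- - H^-\rVert$, then applying Lemma~\ref{lemma1}) with Theorem~\ref{thm1} replaced by the tail bounds of Corollary~10.9 of Halko et al. Your constant bookkeeping --- routing through $\hat{L}$ (incurring the factor $2$) for \eqref{eq:eqq4} and through $L=H^-$ directly for \eqref{eq:eq5} --- reproduces the stated prefactors and is, if anything, more explicit than the paper's one-line argument.
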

\normalsize
\begin{proof}
See appendix.
\end{proof}

\begin{remark}
Another way to implement the stochastic Ho-Kalman Algorithm is to use a structured random matrix like subsampled random Fourier transform, or SRFT to compute the RSVD. In contrast with Gaussian matrix, SRFTs have faster matrix-vector multiply times. As a result $\rsvd$ computation time decreases.  We will present the bounds for SRFT matrix in the appendix.
\end{remark}


We are now ready to show the robustness of stochastic Ho-Kalman algorithm. The robustness result is valid up to a unitary transformation.

\begin{thm}\label{thm5}
Suppose the system ${A}, {B}, {C}, {D}$ is observable and controllable. Let ${O}, {Q}$ be order-n controllability/observability matrices associated with ${G}$ and $\tilde{{O}}, \tilde{{Q}}$ be approximate order-n controllability/observability matrices (computed by RSVD) associated with $\hat{{G}}$. Suppose $\sigma_{\min }({L})>0$ and the following robustness condition is satisfied:
$$
\E\lVert {L} - \tilde{{L}}\rVert \leq \sigma_{\min }({L}) / 2 .
$$
Then, there exists a unitary matrix ${S} \in \mathbb{R}^{n \times n}$ such that,
$$
\begin{array}{l}
\E\lVert{C}-\tilde{{C}} {S}\rVert_{F} \leq \E \lVert{O}-\tilde{{O}} {S}\rVert_{F} \leq \E\sqrt{5 n\lVert {L}-\tilde{{L}}\rVert}, \\
\E\lVert {B}-{S}^{*} \tilde{{B}}\rVert_{F} \leq \E \lVert{Q}-{S}^{*} \tilde{{Q}}\rVert_{F} \leq \E\sqrt{5 n\lVert{L}-\tilde{{L}}\rVert},
\end{array}
$$
and $\tilde{{A}}, {A}$ satisfy
\begin{equation*} \small 
\begin{aligned}
    \E\lVert A-{S}^{*} \tilde{{A}} {S}\rVert_{F} \le
    C_3  \left(\sqrt{\frac{\E \lVert L-\tilde{L}\rVert}{\sigma_{\min }({L})}} (\lVert {H}^{+}\rVert +\lVert{H}^{+}-\hat{{H}}^{+}\rVert )
    +\lVert {H}^{+}-\hat{{H}}^{+}\rVert \right),
\end{aligned}
\end{equation*}
where $C_3= \frac{14 \sqrt{n}}{\sigma_{\min }({L})}.$
\end{thm}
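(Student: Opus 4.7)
The plan is to recycle, essentially verbatim, the deterministic Ho--Kalman perturbation analysis of~\cite{oymak2019non}, with the only change being that the hat-quantities (coming from a truncated SVD of $\hat H^-$) are replaced by tilde-quantities (coming from $\rsvd$ applied to $\hat H^-$). The key point that makes this substitution legal is that $\tilde L = \tilde U \tilde\Sigma \tilde V^*$ is still an explicit rank-$n$ factorization, so $\tilde O = \tilde U \tilde\Sigma^{1/2}$ and $\tilde Q = \tilde\Sigma^{1/2}\tilde V^*$ satisfy $\tilde L = \tilde O \tilde Q$ just as their hat analogues do. What changes is that we no longer have $\tilde L$ being a best rank-$n$ approximation of $\hat H^-$, but we never actually need that property: we only need the scalar $\|L - \tilde L\|$ to be small, and Lemma~\ref{lem3} already controls this quantity in expectation.

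\textbf{Choice of the unitary $S$ and bounds on $C$, $B$, $O$, $Q$.} Write $L = U\Sigma V^*$ and $\tilde L = \tilde U\tilde\Sigma\tilde V^*$, where both factorizations are rank-$n$. The robustness hypothesis $\E\|L-\tilde L\|\le \sigma_{\min}(L)/2$ guarantees, together with Weyl, that $\sigma_{\min}(\tilde L) \ge \sigma_{\min}(L)/2 > 0$ on the event of interest, so $\tilde O, \tilde Q$ are full column/row rank. Choose $S$ to be the orthogonal matrix coming from the polar factor of $\tilde U^* U$ (equivalently, apply Wedin's $\sin\Theta$ theorem to the leading $n$-dimensional invariant subspace), which aligns $\tilde U S$ with $U$. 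A standard balanced-factor argument then yields, pointwise,
\begin{equation*}
\|O - \tilde O S\|_F \;\le\; \sqrt{5n\,\|L-\tilde L\|},
\qquad
\|Q - S^*\tilde Q\|_F \;\le\; \sqrt{5n\,\|L-\tilde L\|}.
\end{equation*}
Taking expectations gives the middle inequalities in the theorem. The bounds on $C$ and $B$ then drop out for free: $C$ is the first $p$ rows of $O$ and $B$ is the first $m$ columns of $Q$, and the Frobenius norm of a submatrix is dominated by the Frobenius norm of the whole.

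\textbf{Bound on $A$.} Recall $\tilde A = \tilde O^\dagger \hat H^+ \tilde Q^\dagger$ and $A = O^\dagger H^+ Q^\dagger$. Insert $\pm \tilde O^\dagger H^+ \tilde Q^\dagger$ and $\pm \tilde O^\dagger \hat H^+ Q^\dagger$ and apply the triangle inequality to split the error into three pieces: one capturing the perturbation of $O^\dagger$, one capturing the perturbation of $Q^\dagger$, and one capturing $H^+ \to \hat H^+$. The first two are handled by the pseudo-inverse perturbation identity $\tilde O^\dagger - O^\dagger = -\tilde O^\dagger(\tilde O - O)O^\dagger + (\ldots)$, where each resulting operator norm is bounded using $\|\tilde O^\dagger\|, \|O^\dagger\| \le 1/\sqrt{\sigma_{\min}(L)}\cdot \sqrt{2}$ (thanks to the robustness condition) together with the bound on $\|O - \tilde O S\|_F$ established above. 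The third piece is controlled by Lemma~\ref{lemma1}, giving $\|H^+ - \hat H^+\| \le \|G - \hat G\|\cdot\sqrt{\min\{T_1, T_2+1\}}$, which is already absorbed into the second summand of the target bound. Collecting all constants and using $\|\hat H^+\| \le \|H^+\| + \|H^+ - \hat H^+\|$ produces the factor $C_3 = 14\sqrt{n}/\sigma_{\min}(L)$.

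\textbf{Main obstacle.} The technical nuisance is the placement of the expectation. The deterministic bounds that one borrows from~\cite{oymak2019non} are pointwise, conditional on the event $\mathcal E = \{\|L-\tilde L\|\le \sigma_{\min}(L)/2\}$. On $\mathcal E^c$ the bounds blow up because $\tilde O^\dagger$ may be ill-conditioned, and Jensen's inequality pushes expectations the wrong way through $\sqrt{\cdot}$ and $1/\sigma_{\min}$. The cleanest way I see to finish is to invoke Lemma~\ref{prob} to show that $\mathcal E$ holds with failure probability at most $3e^{-l}$ (for $l\ge 4$), derive the pointwise bound on $\mathcal E$, and absorb the exponentially small tail into the oversampling factor in Lemma~\ref{lem3}; the stated expectation bounds then follow directly. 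This careful conditioning step, rather than any new inequality, is where the majority of the effort lives.
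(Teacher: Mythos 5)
Your proposal is correct and follows essentially the same route as the paper: the paper's proof is literally ``follow Theorem 5.3 of \cite{oymak2019non} with $\tilde L$ substituted for $\hat L$,'' using the balanced-factorization alignment lemma (Theorem 5.14 of \cite{tu2016low}) for the $O,Q$ bounds and pseudo-inverse perturbation for $A$, exactly as you do. Your closing remark about where the expectation sits is in fact \emph{more} careful than the paper, which simply inserts $\E$ into the deterministic inequalities without the conditioning argument you outline.
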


\begin{proof}
See appendix.
\end{proof}

As discussed in \cite{oymak2019non}, $\lVert{H}^{+}-\hat{{H}}^{+}\rVert,\E\lVert{L}-\hat{{L}}\rVert$ are perturbation terms that can be bounded in terms of  $\|{G}-\hat{{G}}\|$
via Lemma~\ref{lemma1} and Lemma~\ref{lem3}. Theorem~\ref{thm5} shows that the stochastic Ho-Kalman Algorithm has the same error bounds as its deterministic counterpart, which says the estimation errors for system matrix decrease as fast as $O(\frac{1}{N^{1/4}}).$ Our analysis framework can be easily extended to achieve the optimal error bounds $O(\frac{1}{\sqrt{N}})$ mentioned in~\cite{sarkar2019near,sarkar2019finite,tsiamis2019finite,lee2020improved}.

\section{Numerical Experiments}\label{sec:numerics}
\subsection{Stochastic versus deterministic Ho-Kalman Algorithm}
We begin the comparison between the stochastic and deterministic Ho-Kalman Algorithm on six randomly generated systems described by~\eqref{eq:LTI}. For each system, its dimension $(n,m,p)$ is shown in the second column in Table~\ref{tab:1}. Each entry of the system matrix is generated through a uniform distribution over a range of integers as follows: matrix $A$ with random integers from $1$ to $5$, and matrices $B, C, D$ with random integers from $-2$ to $2$. The $A$ matrix is re-scaled to make it Schur stable\footnote{There is no requirement that the systems we work with be stable. However, we are using an $\hinf $-norm metric to judge the approximation error, so such an assumption makes things more straight forward.}, i.e., $\lvert \lambda_{\mathrm{max}}(A)\rvert <1$. The standard deviations of the process and measurement noises are $\sigma_w = 1$ and $\sigma_v= 0.5$. The length of trajectory $T$ is given in the second column in Table~\ref{tab:1} with $T_1$ chosen to be the smallest integer not less than $T/2$ and $T_2 = T-1 -T_1.$ The third column in Table~\ref{tab:1} denotes the matrix dimension of $\hat{H}^-$ when we run  Algorithm~\ref{alg:ho-kalman}. 

\begin{table*}[t]
\small
\centering
\begin{tabular}{|l|l|l|l|l|l|l|}\hline
	\multirow{2}{*}{Eg} & \multirow{2}{*}{$(n,m,p,T)$} & \multirow{2}{*}{dim$(\hat{H}^-)$} & \multicolumn{2}{l|}{\textbf{Running Time [s]}}&\multicolumn{2}{l|}{\textbf{Realization Error}} \\ \cline{4-7}
	& & & \textbf{deterministic} & \textbf{stochastic} & \textbf{deterministic}  & \textbf{stochastic} \\ \hline
	1 &(30,20,10,90) &$450\times880$
&0.1079&$0.0156$&7.64e-04&7.70e-04 \\ \hline
2&(40,30,20,100)&$2000\times2970$&5.7456&0.0897&6.67e-04&1.19e-03 \\ \hline
3 & (60,50,40,360)&$7200 \times 8950$& 227.0116&0.9323&8.27e-04&1.75e-03\\ \hline
4&(100,80,50,500)&$12500 \times 19920$&
922.8428&
4.4581&6.53e-04&1.66e-03\\ \hline
5&(120,110,90,600)&$27000\times 32890$
&Inf&17.6603& N/A&1.96e-03\\ \hline
6&(200,150,100,600)&$30000\times 44850$&Inf &52.1762& N/A&1.45e-03\\ \hline
\end{tabular}
\caption{Comparison between the stochastic and deterministic Ho-Kalman Algorithm. The running time is in seconds.  The approximate SVD is computed using $\rsvd$   with oversampling parameter $l=10$. To benchmark the algorithm performance,  a naive implementation of $\rsvd$ is used; we \emph{do not} use power iterations and \emph{do not}  make use of parallelization. Inf and N/A indicates that the deterministic algorithm fail to realize the system.}
\label{tab:1}
\end{table*}
\normalsize

We denote the true system as $\mathcal{G}(A, B, C,D)$ and the estimated system returned by the stochastic/deterministic Ho-Kalman algorithm as $\tilde{\mathcal{G}}(\tilde{A}, \tilde{B}, \tilde{C}, \tilde{D})/\hat{\mathcal{G}}(\hat{A}, \hat{B}, \hat{C}, \hat{D})$. We will use $\mathcal{G}, \tilde{\mathcal{G}}, \hat{\mathcal{G}}$ at times to reduce notational clutter. The realization error of the algorithm is measured by the 
normalized $\hinf$ error: $\frac{\lVert \tilde{\mathcal{G}}-\mathcal{G} \rVert_{\mathcal{G}_{\infty}}}{\lVert \mathcal{G} \rVert_{\mathcal{H}_{\infty}}}/\frac{\lVert \hat{\mathcal{G}}-\mathcal{G} \rVert_{\mathcal{H}_{\infty}}}{\lVert \mathcal{G} \rVert_{\mathcal{G}_{\infty}}}.$ The running time and the realization error of the deterministic and stochastic algorithms\footnote{We used the publicly available python package sklearn.utils.\allowbreak extmath.\allowbreak randomized_svd to compute the RSVD.} are reported in Table~\ref{tab:1} where the results for the stochastic algorithm are  average over 10 independent trials. All experiments are done on a $2.6$ GHz Intel Core i7 CPU.

The reported running time in the stochastic setting is highly conservative: we did not parallelize the sampling (i.e., constructing $A\Omega$ in line 4 of Algorithm~\ref{alg:svd}). Furthermore, as noted earlier (and further described in the appendix), standard Gaussian matrices are theoretically "nice" to work with but structured random matrices, such as SRFT matrices which compute $Y=A\Omega$ via a subsampled FFT~\cite{woolfe2008fast} will offer superior running times.

We observe that the stochastic Ho-Kalman algorithm consistently leads to a dramatic speed-up over the deterministic algorithm. The larger the system dimension is, the larger the run time gap is. It is worthwhile to mention that the deterministic Ho-Kalman Algorithm fails to provide a result in the $5^{\text{th}}$and $6^{\text{th}}$ examples where the system state dimensions are above 100. Meanwhile, the stochastic algorithm runs successfully and takes a fraction of the time the deterministic algorithm took to solve a 60 state realization problem. The stochastic algorithm can easily be applied to much larger systems. However with no means of comparison to existing algorithms, and having established the theoretical properties of the algorithm,  we do not pursue this avenue further here.

\subsection{Oversampling effects}
To illustrate the influence of oversampling (parameter $l$ in $\rsvd$), we run the stochastic Ho-Kalman Algorithm on the $4^{\text{th}}$ example $(n=100,m=80,p=50)$ in Table~\ref{tab:1} and vary the oversampling parameter $l$ from 1 to 10. In this experiment we use a power iteration parameter of $q=1$. Running times (averaged over 10 runs) and realization errors (averaged over 10 runs) are shown via a boxplot in Figure~\ref{fig:oversample1} and graph in Figure~\ref{fig:oversample2}. In the box-plot, the central red mark indicates the median, and the bottom and top edges of the box indicate the 25th and 75th percentiles, respectively. Outliers are denoted by "+". We observe that in Fig~\ref{fig:oversample2}, the realization error tends to be larger when a small oversampling number is used, although the change is slight. The observed behavior  is consistent with the theoretical analysis of Lemma~\ref{lem3}.  We can also observe from Fig~\ref{fig:oversample1} that the computational time is insensitive to the oversampling parameters, as such taking larger values of $l=10$ is advantageous.

\begin{figure*}
	\begin{subfigure}[t]{0.48\textwidth}
		\includegraphics[width=\textwidth]{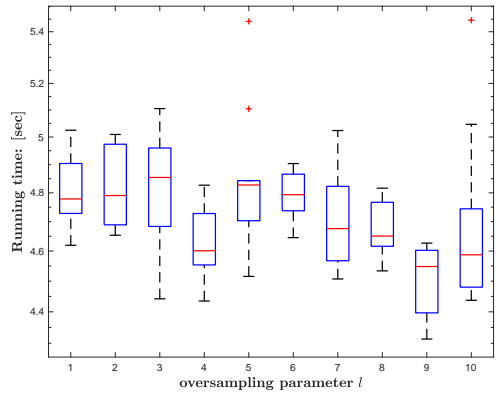}
     	\caption{Running time of stochastic Ho-Kalman Algorithm using $\rsvd$ with  oversampling parameter $l$. }
	    \label{fig:oversample1}
	\end{subfigure}
	\hfill 
	\begin{subfigure}[t]{0.48\textwidth}
		\includegraphics[width=\textwidth]{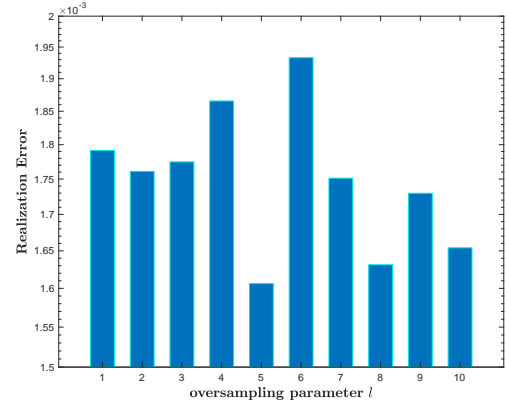}
		 \caption{Realization error of stochastic Ho-Kalman Algorithm using $\rsvd$ with  oversampling parameter $l$.}
	  \label{fig:oversample2}  
	\end{subfigure}
\vskip\baselineskip
\begin{subfigure}[t]{0.48\textwidth}
	\centering
	\includegraphics[width=\textwidth]{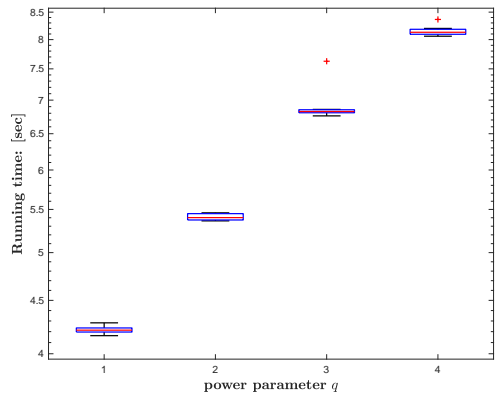}
	\caption{Running time of stochastic Ho-Kalman Algorithm with varying power parameter $q$. The oversampling parameter $l$ is 10.}
	\label{fig:power1}
\end{subfigure}
\hfill
\begin{subfigure}[t]{0.48\textwidth}
	\centering
	\includegraphics[width=\textwidth]{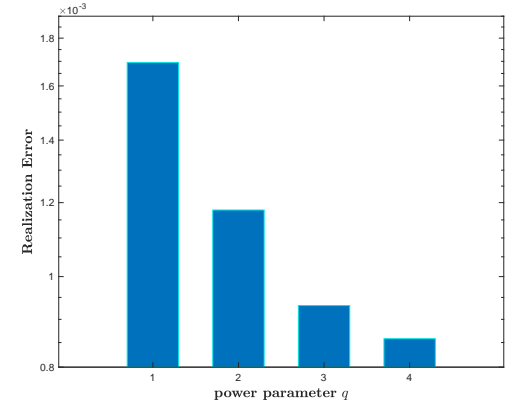}
	\caption{Realization error of the stochastic Ho-Kalman Algorithm with varying power parameter $q$. The oversampling parameter $l$ is 10.}
	\label{fig:power2}
\end{subfigure}
\caption{Oversampling and power iteration effect}
\end{figure*}

\subsection{Power iteration effect}
We now investigate numerically  how $\rsvd$ implemented with a power iteration impacts the performance of the stochastic Ho-Kalman Algorithm. Again we focus on example $4$. Based on the results of the previous subsection, we fix the oversampling parameter in $\rsvd$ as $l=10$, and sweep $q$ from 1 to 4. The results are shown   in Figures~\ref{fig:power1} and~\ref{fig:power2}. We observe that the realization error decreases as the power parameter $q$ increases as indicated in Eq~\eqref{eq:eq3}.  In contrast to the oversampling parameter $l$, the runtime demonstrably increases with $q$ at an empirically linear rate. This trend is expected and analyzed  in~\cite{halko2011finding}.

The power iteration method is most effective for problems where the spectrum of the matrix being approximated decays slowly. In the noise free setting,   $\rank(\mathcal H^-)=n$, where $n=100$ in this example. In contrast the dimensions of $\mathcal{H}^-$ are  $12500 \times 19920$. When noise is introduced, $\mathcal H^-$ becomes full rank and the spectral decay depends on $\sigma_w$ and $\sigma_v$. For the values chosen, these results show that spectral decay appears to be sharp enough that the power iterations do not offer \emph{significant} improvement in accuracy. However, as  $\sigma_v$ and $\sigma_w$ increase, the effect will become more dramatic.

\section{Conclusion}

We have introduced a scalable algorithm for system realization based on introducing randomized numerical linear algebra techniques into the Ho-Kalman algorithm. Theoretically it has been shown that our algorithm provides  non-asymptotic performance guarantees that are competitive with deterministic approaches. Furthermore, without any algorithm optimization, we have shown that the stochastic algorithm easily handles problem instances of a size significantly beyond what classical deterministic algorithms can handle.

In forthcoming work, using \emph{sketching}-based randomized methods, we have designed and analyzed a distributed  second-order algorithm for solving OLS problems for the Markov parameter estimation problem. We are currently working on the derivation of end-to-end performance bounds for the full randomized  system identification pipeline.

\section{Acknowledgements}
James Anderson and Han Wang acknowledge funding from the Columbia Data Science Institute. Han Wang is kindly supported by a Wei Family Foundation fellowship.

\bibliographystyle{IEEEtran}
\bibliography{reference}
\begin{appendices} 
\section{Proofs} \label{appendix}
\subsection{Proof of Lemma~\ref{lem3} and~\ref{prob}}
 \begin{proof}[Proof of Lemma \ref{lem3}]
To prove \eqref{eq:eq2}, we first use the triangle inequality to get the following bound:
\begin{equation}\label{eq:first}
\E\lVert {L} - \tilde{{L}}\rVert
 \le \lVert {H}^{-} - \hat{{H}}^{-} \rVert + \E \lVert \hat{{H}}^{-} - \tilde{{L}}\rVert ,
\end{equation}
where $L = H^-$ and $H^-$ is of rank $n$. Then we bound $\E \lVert \hat{{H}}^{-} - \tilde{{L}}\rVert$ by applying Theorem~\ref{thm1} to $\hat{H}^{-}$ with $q=0$, giving:
\begin{align}
 \E \lVert \hat{{H}}^{-} - \tilde{{L}}\rVert &\le  
 \Big(1 + \sqrt{\frac{n}{l-1}} + \frac{e\sqrt{n+l}}{l} \sqrt{\min\{pT_1, mT_2\} -n}\Big )\lVert \hat{{H}}^{-} - \hat{{H}}^{-}_{[n]}\rVert \nonumber \\
 & \le \Big(1 + \sqrt{\frac{n}{l-1}} + \frac{e\sqrt{n+l}}{l} \sqrt{\min\{pT_1, mT_2\} -n}\Big )\lVert \hat{{H}}^{-} - H\rVert.
 \end{align}
The first inequality follows from $\tilde{{L}} = P P^*\hat{{H}}^{-}$. The second inequality is due to the fact that $\hat{{H}}^{-}_{[k]}$ is the best rank $k$ approximation of $\hat{{H}}^{-}$. Plugging the inequality above into~\eqref{eq:first} and applying Lemma~\ref{lemma1}, we obtain the inequality~\eqref{eq:eq2}.

Applying the bound in Theorem~\ref{thm1} to $\hat{H}^{-}$ with a fixed positive integer $q>0$ gives us
\begin{align}
 \E \lVert \hat{{H}}^{-} - \tilde{{L}}\rVert 
 & \le \Big(1 + \sqrt{\frac{n}{l-1}} + \frac{e\sqrt{n+l}}{l} \sqrt{\min\{pT_1, mT_2\} -n}\Big )^{1/(2q+1)} \nonumber \\
 &\quad \times \lVert \hat{{H}}^{-} - H\rVert + 1^{1/(2q+1)}\lVert \hat{{H}}^{-} - H\rVert \nonumber\\
 & \overset{(a)}{\leq} 2\Big(1+ \frac{1}{2}\sqrt{\frac{n}{l-1}} + \frac{1}{2}\frac{e\sqrt{n+l}}{l} \sqrt{\min\{pT_1, mT_2\} -n}\Big)^{1/(2q+1)}
  \lVert \hat{{H}}^{-} - H\rVert 
 \end{align}
Inequality (a) holds because $x^{1/(2q+1)}$ is concave in  $x$. Applying  Lemma~\ref{lemma1}, we prove the inequality~\eqref{eq:eq3}.
\end{proof}

 \begin{proof}[Proof of Lemma \ref{prob}]
We can follow the same steps as in the proof of Lemma~\ref{lem3} and use the bound given in Corollary 10.9~\cite{halko2011finding}.
\end{proof}

\subsection{Perturbation bounds for Stochastic Ho-Kalman Algorithm with SRFT Test Matrices}\label{sec:SRFT}
The subsampled random Fourier transform (SRFT), which might be the simplest structured random matrix, is an $n \times \ell$ matrix of the form
$$
\Omega=\sqrt{\frac{n}{\ell}} D F R
$$
where $D$ is an $n \times n$ diagonal matrix whose entries are independent random variables uniformly distributed on the complex unit circle, $F$ is the $n \times n$ unitary discrete Fourier transform (DFT), whose entries take the values 
\begin{equation*}
f_{p q}=n^{-1 / 2} \mathrm{e}^{-2 \pi i(p-1)(q-1) / n} ~\text{for}~ p, q=1,2, \ldots, n,
\end{equation*}
and $R$ is an $n \times \ell$ matrix that samples $\ell$ coordinates from $n$ uniformly at random~\cite{halko2011finding}. When $\Omega$ is a SRFT matrix, we can calculate the matrix multiplication $Y=A \Omega$ using $\mathrm{O}(m n \log (\ell))$ flops by applying a subsampled FFT~\cite{woolfe2008fast}. 
\begin{lem}\label{lemmm} \textbf{(Deviation bound)} Denote $l\ge 2$ to be the oversampling parameter used in RSVD algorithm. Run the Stochastic Ho-Kalman Algorithm with a SRFT matrix $\Omega \in \mathcal{R}^{mT_2 \times(n+l)}$ in computing the RSVD step, where $4[\sqrt{n}+\sqrt{8 \log (n mT_2)}]^{2} \log (n)  \leq l +n\leq mT_2 $. Then ${L}, \tilde{{L}}$ satisfy the following perturbation bound:
\begin{equation}\label{eq:eq4}\small
\begin{aligned}
  \lVert L - \tilde{L}\rVert 
    \le (1+\sqrt{1+ \frac{7mT_2}{l+n}} )\times 2 \sqrt{\min \{ T_{1}, T_{2}\}} \lVert G-\hat{G}\rVert 
\end{aligned}
\end{equation}
with failure probability at most $\mathrm{O}\left(n^{-1}\right)$.
\end{lem}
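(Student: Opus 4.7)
My plan is to mirror the argument used for the Gaussian case in Lemmas~\ref{lem3} and~\ref{prob}, substituting the Gaussian RSVD error estimate from Theorem~\ref{thm1} with the high-probability SRFT deviation bound of Halko, Martinsson, and Tropp (Theorem 11.2 in~\cite{halko2011finding}). The first step is the triangle inequality
\begin{equation*}
\|L - \tilde{L}\| \;\le\; \|H^- - \hat{H}^-\| \;+\; \|\hat{H}^- - \tilde{L}\|,
\end{equation*}
where I use that $L = H^-$ because $H^-$ has rank exactly $n$ by the controllability/observability assumption. This splits the bound into a ``data noise'' piece and a ``randomized approximation'' piece; the first will be handled by Lemma~\ref{lemma1} and the second by the SRFT theorem.

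For the randomized piece I would write $\tilde{L} = P P^* \hat{H}^-$ for the orthonormal range approximant produced by $\rsvd$ on line 4 of Algorithm~\ref{alg:svd}. The hypothesis $4[\sqrt{n}+\sqrt{8 \log(n\,mT_2)}]^2 \log(n) \le l + n \le mT_2$ is precisely the dimensional condition required by Theorem 11.2 of~\cite{halko2011finding}, applied to $A = \hat{H}^-$ with target rank $k = n$, sample count $\ell = l + n$, and ambient column dimension $mT_2$. That theorem therefore gives, with failure probability $O(n^{-1})$,
\begin{equation*}
\|\hat{H}^- - P P^* \hat{H}^-\| \;\le\; \sqrt{1 + \tfrac{7\,mT_2}{l+n}}\;\sigma_{n+1}(\hat{H}^-).
\end{equation*}
Because $H^-$ is itself rank-$n$, it is a feasible rank-$n$ approximant of $\hat{H}^-$, so the Eckart--Young inequality yields $\sigma_{n+1}(\hat{H}^-) \le \|\hat{H}^- - H^-\|$. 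Feeding this back into the triangle inequality produces
\begin{equation*}
\|L - \tilde{L}\| \;\le\; \Bigl(1 + \sqrt{1 + \tfrac{7\,mT_2}{l+n}}\Bigr)\,\|H^- - \hat{H}^-\|
\end{equation*}
with the stated failure probability.

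Finally, I would close the argument by invoking Lemma~\ref{lemma1} to bound $\|H^- - \hat{H}^-\|$ by $\sqrt{\min\{T_1,T_2\}}\,\|G - \hat{G}\|$, which, combined with the inequality above, delivers~\eqref{eq:eq4}. I do not expect a genuine technical obstacle here, since essentially all of the probabilistic work is packaged in Theorem 11.2 of~\cite{halko2011finding}; the only delicate point is bookkeeping, namely verifying that the oversampling hypothesis translates correctly under the identification $(k,\ell) = (n, l+n)$ and matching the constant factor between the two bullets of Lemma~\ref{lemma1} so that the final bound arrives with the factor $2\sqrt{\min\{T_1,T_2\}}$ stated in the lemma rather than $\sqrt{\min\{T_1,T_2\}}$.
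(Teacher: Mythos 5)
Your proposal is correct and matches the paper's proof, which is exactly ``follow the same steps as in the proof of Lemma~\ref{lem3} and apply Theorem 11.2 of~\cite{halko2011finding}'' with the parameter identification $(k,\ell,n)\mapsto(n,\,l+n,\,mT_2)$ that you spell out. Your bookkeeping in fact yields the slightly tighter constant $\bigl(1+\sqrt{1+7mT_2/(l+n)}\bigr)\sqrt{\min\{T_1,T_2\}}$, which implies the stated bound since the paper's constants here (as in Lemma~\ref{lem3}) are deliberately loose by a factor of $2$.
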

\begin{proof}
We can follow the same steps as in the proof of Lemma~\ref{lem3} and use the bound given in Theorem 11.2 of~\cite{halko2011finding} to finish the proof.
\end{proof}
\subsection{Proof of Theorem~\ref{thm5}}
To prove Theorem~\ref{thm5}, we require two auxiliary lemmas.
\begin{lem}\label{lemma4}
Suppose $\sigma_{\min }({L}) \geq 2 \E\lVert {L} - \tilde{{L}}\rVert$ where $\sigma_{\min }({L})$ is the smallest nonzero singular value (i.e. $n$-th largest singular value) of ${L}$. Let rank $n$ matrices ${L}, \tilde{{L}}$ have the singular value decomposition ${U} {\Sigma} {V}^{*}$ and $\tilde{{U}} \tilde{\Sigma} \tilde{{V}}^{*}$. There exists an $n \times n$ unitary matrix ${S}$ so that
\begin{equation}\label{eq:LHS}
\begin{aligned}
\E \lVert {U} {\Sigma}^{1 / 2}-\tilde{{U}} \tilde{\Sigma}^{1 / 2} {S}\rVert_{F}^{2}+\E\lVert V {\Sigma}^{1 / 2}-\tilde{V} \tilde{\Sigma}^{1/2}S \rVert_{F}^{2} \leq 5 n \E\lVert {L} - \tilde{{L}}\rVert .
\end{aligned}
\end{equation}
\end{lem}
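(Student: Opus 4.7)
The claim is a Frobenius-norm perturbation bound for balanced rank-$n$ factorizations of $L$ and $\tilde L$. Setting $O := U\Sigma^{1/2}$, $Q^* := V\Sigma^{1/2}$ (and analogously $\tilde O, \tilde Q$), one has $O^*O = QQ^* = \Sigma$, and the task reduces to exhibiting a single unitary $S$ making each of $\lVert O - \tilde O S\rVert_F^2$ and $\lVert Q^* - \tilde Q^* S\rVert_F^2$ of order $\tfrac{5}{2}n\lVert L - \tilde L\rVert$. My plan is to establish the inequality pointwise in $\tilde L$ on the event $\mathcal{E} := \{\lVert L - \tilde L\rVert \le \sigma_{\min}(L)/2\}$ and then take expectations; the hypothesis $\sigma_{\min}(L) \ge 2\E\lVert L-\tilde L\rVert$ makes $\mathcal{E}$ the typical case and, via Markov's inequality, lets the contribution of $\mathcal{E}^c$ be absorbed into the constant.

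For the pointwise bound on $\mathcal{E}$, I would take $S$ to be the orthogonal Procrustes aligner of the top-$n$ left singular-vector bases, i.e., $S \in \arg\min \lVert U - \tilde U S'\rVert_F$ over unitary $S'$, and combine three standard ingredients. (i) Wedin's $\sin\Theta$ theorem: under the spectral gap $\sigma_{\min}(L) - \lVert L - \tilde L\rVert \ge \sigma_{\min}(L)/2$ guaranteed on $\mathcal{E}$, one obtains $\lVert U - \tilde U S\rVert, \lVert V - \tilde V S\rVert \lesssim \lVert L - \tilde L\rVert/\sigma_{\min}(L)$. (ii) Weyl's inequality gives $\lVert \Sigma - \tilde\Sigma\rVert \le \lVert L - \tilde L\rVert$, and the positive-definiteness of $\Sigma$ on $\mathcal E$ together with the elementary bound $\lVert A^{1/2} - B^{1/2}\rVert \le \lVert A - B\rVert/(\sqrt{\sigma_{\min}(A)}+\sqrt{\sigma_{\min}(B)})$ for $A, B \succeq 0$ yields $\lVert \Sigma^{1/2} - S^*\tilde\Sigma^{1/2}S\rVert \lesssim \lVert L - \tilde L\rVert/\sqrt{\sigma_{\min}(L)}$. (iii) The decomposition
\begin{equation*}
O - \tilde O S \;=\; (U - \tilde U S)\,\Sigma^{1/2} \;+\; \tilde U S\,(\Sigma^{1/2} - S^*\tilde\Sigma^{1/2}S),
\end{equation*}
together with its analogue for $Q^* - \tilde Q^* S$, converts the subspace and singular-value estimates from (i) and (ii) into Frobenius-norm bounds on the two factorization differences. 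Passing from spectral to Frobenius norm on rank-$n$ factors costs a factor of $\sqrt{n}$ per term, and squaring plus summing the two terms collapses the various constants into the stated $5n\lVert L-\tilde L\rVert$.

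Finally I would take expectations of the deterministic inequality that holds on $\mathcal{E}$ and dispatch the small-probability event $\mathcal{E}^c$ via the Markov bound $\Pr(\mathcal{E}^c) \le 2\E\lVert L - \tilde L\rVert/\sigma_{\min}(L) \le 1$ together with a crude a priori bound on the LHS (e.g., $\lVert O\rVert_F^2 + \lVert Q^*\rVert_F^2 \le 2n\lVert L\rVert$), so that the total constant still fits inside $5n$. The main obstacle will be the careful constant-chasing through (i)--(iii) and $\mathcal{E}^c$ to recover exactly the factor $5$; the SVD-perturbation inputs themselves are classical once the spectral gap is in hand, and the expectation step is essentially linearity after that point.
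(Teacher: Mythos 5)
Your plan diverges from the paper's proof, which is a one-line application of a known balanced-factorization perturbation bound (Lemma/Theorem 5.14 of Tu et al., \emph{Procrustes flow}): under $\lVert L-\tilde L\rVert \le \sigma_{\min}(L)/2$ that result directly gives $\lVert U\Sigma^{1/2}-\tilde U\tilde\Sigma^{1/2}S\rVert_F^2+\lVert V\Sigma^{1/2}-\tilde V\tilde\Sigma^{1/2}S\rVert_F^2 \le \tfrac{2}{\sqrt2-1}\lVert L-\tilde L\rVert_F^2/\sigma_{\min}(L)$, and the factor $5n$ then follows from $\rank(L-\tilde L)\le 2n$, i.e.\ $\lVert L-\tilde L\rVert_F\le\sqrt{2n}\lVert L-\tilde L\rVert$, together with the hypothesis $\sigma_{\min}(L)\ge 2\lVert L-\tilde L\rVert$. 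The crucial feature of that cited bound is that it is \emph{condition-number free}, and this is exactly where your from-scratch route breaks. In your decomposition $O-\tilde OS=(U-\tilde US)\Sigma^{1/2}+\tilde US(\Sigma^{1/2}-S^*\tilde\Sigma^{1/2}S)$, the first term is bounded by $\lVert U-\tilde US\rVert_F\,\sigma_1(L)^{1/2}$, and Wedin only gives $\lVert U-\tilde US\rVert_F\lesssim \sqrt{n}\,\lVert L-\tilde L\rVert/\sigma_{\min}(L)$ (the relevant gap for the full rank-$n$ subspace is $\sigma_n$). Squaring yields $n\lVert L-\tilde L\rVert^2\sigma_1(L)/\sigma_{\min}(L)^2\le \tfrac{n}{2}\kappa(L)\lVert L-\tilde L\rVert$ on your event $\mathcal E$, i.e.\ an extra factor of the condition number $\kappa(L)=\sigma_1(L)/\sigma_{\min}(L)$, which is not assumed bounded. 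The reason the true bound has no $\kappa(L)$ is a cancellation your triangle inequality discards: the singular directions that rotate the most (by $\sim\lVert L-\tilde L\rVert/\sigma_{\min}$) are precisely the ones weighted by the \emph{smallest} entries of $\Sigma^{1/2}$. Recovering this coupling is the actual content of the cited lemma; Wedin $+$ Weyl $+$ triangle inequality does not reproduce it. A second, smaller gap: step (ii) controls $\lVert\Sigma^{1/2}-\tilde\Sigma^{1/2}\rVert$, but your second term needs $\lVert\Sigma^{1/2}-S^*\tilde\Sigma^{1/2}S\rVert$, which requires an additional argument that the Procrustes aligner $S$ approximately commutes with $\tilde\Sigma^{1/2}$ (true only up to clustering of singular values, and not one of your three stated ingredients).

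On the expectation handling: your instinct to condition on $\mathcal E=\{\lVert L-\tilde L\rVert\le\sigma_{\min}(L)/2\}$ is more careful than the paper, which simply inserts $\E$ into the deterministic statement. But your dispatch of $\mathcal E^c$ does not close: Markov only gives $\Pr(\mathcal E^c)\le 2\E\lVert L-\tilde L\rVert/\sigma_{\min}(L)$, so multiplying by your crude a priori bound $\sim n\lVert L\rVert$ again produces $n\,\kappa(L)\,\E\lVert L-\tilde L\rVert$, not $n\,\E\lVert L-\tilde L\rVert$ (and on $\mathcal E^c$ you would also need to control $\lVert\tilde L\rVert$, which the crude bound in terms of $L$ alone does not do). In short, the missing ingredient is the condition-number-free Procrustes bound for balanced factorizations; citing it (as the paper does) or reproving it is unavoidable, and once you have it the $5n$ constant falls out of the low-rank norm conversion rather than from constant-chasing through Wedin and Weyl.
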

\begin{proof}: Direct application of Theorem $5.14$ of~\cite{tu2016low} guarantees the existence of a unitary ${S}$ such that
\begin{equation}
\begin{aligned}
\mathrm{LHS}=&\E\lVert {U} {\Sigma}^{1 / 2}-\tilde{{U}} \tilde{\Sigma}^{1 / 2} {S}\rVert_{F}^{2}+\E\lVert V {\Sigma}^{1 / 2}-\tilde{{V}} \tilde{\Sigma}^{1 / 2} {S}\rVert_{F}^{2} \\
&\leq \frac{2}{\sqrt{2}-1} \frac{\E\lVert{L}-\tilde{{L}}\rVert_{F}^{2}}{\sigma_{\min }({L})},
\end{aligned}
\end{equation}
where LHS refers to the left hand side of~\eqref{eq:LHS}. To proceed, using $\E\operatorname{rank}({L}-\tilde{{L}}) \leq 2 n$ and by assumption $\sigma_{\min }({L}) \geq 2\E\|{L}-\tilde{{L}}\| \geq \sqrt{2 / n}\E\|{L}-\tilde{{L}}\|_{F}$, we find
$
\text { LHS } \leq \frac{\sqrt{2 n}}{\sqrt{2}-1}\E\|{L}-\tilde{{L}}\|_{F} \leq \frac{2 n}{\sqrt{2}-1}\E\|{L}-\tilde{{L}}\| \leq 5 n\E\|{L}-\tilde{{L}}\|$ .
\end{proof}

\begin{lem}
 Suppose $\sigma_{\min }({L}) \geq 2\geq 2 \E\lVert {L} - \tilde{{L}}\rVert .$ Then, $\E\|\tilde{{L}}\| \leq 2\|{L}\|$ and $\sigma_{\min }(\E\tilde{{L}}) \geq \sigma_{\min }({L}) / 2$.
 \end{lem}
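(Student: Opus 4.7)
The plan is to establish both claims by elementary matrix perturbation tools, namely the triangle inequality for the operator norm together with Weyl's (Mirsky's) inequality for singular values, and to then take expectations using the hypothesis $\E\|L-\tilde L\|\le \sigma_{\min}(L)/2$ (and Jensen where needed). Nothing deeper is required; the previous two lemmas already did the heavy lifting of controlling $\E\|L-\tilde L\|$.

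For the first bound, I would apply the triangle inequality to write $\|\tilde L\| \le \|L\| + \|\tilde L - L\|$, take expectations, and invoke the hypothesis to get $\E\|\tilde L\| \le \|L\| + \sigma_{\min}(L)/2$. Since $\sigma_{\min}(L) \le \sigma_{\max}(L) = \|L\|$, this yields $\E\|\tilde L\| \le \tfrac{3}{2}\|L\| \le 2\|L\|$, which is the desired inequality with a little slack to spare.

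For the second bound I would use Weyl's inequality for singular values, which says $|\sigma_i(A) - \sigma_i(B)| \le \|A-B\|$ for any conformable matrices $A,B$ and every index $i$. Applying this at $i=n$ (the smallest nonzero singular value, since both $L$ and $\tilde L$ are rank $n$ by construction), I get $\sigma_{\min}(\tilde L) \ge \sigma_{\min}(L) - \|L-\tilde L\|$ pathwise. To get the expectation form $\sigma_{\min}(\E\tilde L) \ge \sigma_{\min}(L)/2$ I would next pull the expectation inside using Jensen: $\|\E\tilde L - L\| = \|\E(\tilde L-L)\| \le \E\|\tilde L-L\|$. Applying Weyl to the deterministic pair $L, \E\tilde L$ then gives $\sigma_{\min}(\E\tilde L) \ge \sigma_{\min}(L) - \E\|\tilde L - L\| \ge \sigma_{\min}(L) - \sigma_{\min}(L)/2 = \sigma_{\min}(L)/2$.

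There is no real obstacle here; the main thing to be careful about is the interpretation of $\sigma_{\min}(\E\tilde L)$ versus $\E\sigma_{\min}(\tilde L)$ (the bound above in fact gives both, since Weyl plus Jensen works either way), and the fact that the relevant singular value index is $n$ rather than $\min\{pT_1,mT_2\}$, which is legitimate because $\tilde L$ is the rank-$n$ output of $\rsvd$ and $L=H^-$ has rank $n$ by the controllability/observability assumption made earlier. With those points in mind the proof is a few lines.
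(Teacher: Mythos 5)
Your proof is correct and is essentially the same argument as the one the paper defers to (Lemma 2.2 of Oymak--Ozay, which is exactly triangle inequality for the first claim and Weyl's singular-value perturbation bound for the second), with the Jensen step $\lVert \E(\tilde L - L)\rVert \le \E\lVert \tilde L - L\rVert$ correctly added to handle the expectation version stated here. Your remark about reading $\sigma_{\min}$ as the $n$-th largest singular value is the right way to interpret the statement, and your constant $\tfrac{3}{2}\lVert L\rVert$ is in fact slightly sharper than the claimed $2\lVert L\rVert$.
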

 \begin{proof}
 See Lemma 2.2 in~\cite{oymak2019non}.
 \end{proof}
 Using these, we will prove the robustness of the stochastic Ho-Kalman Algorithm, which is stated in Theorem~\ref{thm5}. The robustness will be up to a unitary transformation similar to Lemma~\ref{lemma4}.
 \begin{proof}
 The proof is obtained by following the proof of Theorem 5.3 in ~\cite{oymak2019non} and  substitute $\tilde {L}$ for $\hat {L}$.
 \end{proof}

\section{Markov parameters estimation by least squares} \label{sec:est}
 Given a sequence $\{z_i\}_{i=0}^{k-1}$, the operator $\Toep(z)$ returns a $k\times k$ upper-triangular Toeplitz matrix $Z$, where
\begin{equation*}
Z_{i,j} = Z_{i+1,j+1}=z_{i-j}, \quad \text{if } i\le j,
\end{equation*}
and $Z_{i,j}=0$ when $i>j$.

We will briefly introduce some existing results on learning the Markov parameter matrix $G$. The matrix $G$ can be learned by solving the following OLS problem:
\begin{equation}\label{eq:leastsq}
\hat{G}=\underset{{X} \in R^{p \times mT}}{\operatorname{argmin}}\|{Y}-{X} {U}\|_{F}^{2}={Y} {U}^{T}\left({U} {U}^{T}\right)^{-1}
\end{equation}
where
$$
{Y} =\left[{y}^{(1)} \ldots {y}^{(N)}\right]\in \mathbb{R}^{p\times NT},
$$
$${U} =\left[{U}^{(1)} \ldots {U}^{(N)}\right] \in \mathbb{R}^{mT \times NT}.
$$
Where ${U}^{(i)}=\Toep(u_0^{(i)},\hdots, u_{T-1}^{(i)})$ and $y^{(i)}=[y_{0}^{(i)} \ y_{1}^{(i)} \ \ldots \ y_{T-1}^{(i)}] \in \mathbb{R}^{p \times T}$. Note that there are different methods from Eq~\ref{eq:leastsq} to formulate the least square problem, depending on how the input/output data samples are collected and utilized~\cite{oymak2019non,zheng2020non}. All the existing work \cite{sun2020finite,oymak2019non,sarkar2019finite,simchowitz2019learning,tu2017non,zheng2020non} shows that the estimated Markov parameters converge to the true Markov parameters at a rate of $O(\frac{1}{\sqrt{N}})$, where $N$ is the number of trajectories, regardless of the algorithm used.
 
\end{appendices}

\end{document}